\documentclass[12pt,a4paper]{amsart}
 \pdfoutput=1
\usepackage{amsmath}
 \usepackage{amsmath,amssymb,amsthm}
 \usepackage{bm}  
 \usepackage{mathtools}
 \usepackage{tikz}
 \usepackage{float}

\usepackage{enumitem}
 
 \usepackage[utf8]{inputenc}
  \usepackage{csquotes}
 \usepackage[T1]{fontenc}
 \usepackage{lmodern}
 \usepackage[babel]{microtype}
 \usepackage[english]{babel}

 \usepackage[giveninits=true, backend=biber, maxnames=99]{biblatex}
 \DeclareFieldFormat[article,incollection,inbook]{title}{#1}
 \usepackage{geometry}
\newcommand{\cm}{\mathcal{M}}

\newcommand{\cs}{\mathcal{S}}

\newcommand{\cv}{\mathcal{V}}
\newcommand{\V}{\mathcal{V}}

\newcommand{\ch}{\mathcal{H}}

\newcommand{\C}{\mathcal{C}}

\newcommand{\ci}{\mathcal{I}}

 \usepackage{enumitem}

\DeclareMathAlphabet{\mymathbb}{U}{BOONDOX-ds}{m}{n}

 \usepackage{xcolor} 	
 \usepackage{hyperref}
 \hypersetup{
 	colorlinks,
     linkcolor={red!60!black},
     citecolor={green!60!black},
     urlcolor={blue!60!black},
 }

\theoremstyle{plain}
\newtheorem{theorem}{Theorem}[section]

\newtheorem{claim}[theorem]{Claim}

\newtheorem{lemma}[theorem]{Lemma}

\newtheorem{observation}[theorem]{Observation}

\newtheorem{conjecture}[theorem]{Conjecture}
\newtheorem{problem}[theorem]{Problem}

\newtheorem*{theorem*}{Theorem}
\newtheorem*{corollary*}{Corollary}

\theoremstyle{definition}
\newtheorem{remark}[theorem]{Remark}

\title{Global and local degree conditions for matchability}

\author{Ron Aharoni}
\address{Ron Aharoni, Department of Mathematics, Technion, Haifa, Israel 32000}
\email{ra@tx.technion.ac.il}

\author{Eli Berger}
\address{Eli Berger Department of Mathematics, Haifa University, Haifa, Israel 31000}
\email{berger@math.haifa.ac.il}

\author{Attila Jo\'{o}}
\address{Attila Jo\'{o}, Department of Mathematics, Technion, Haifa, Israel 32000}
\email{a.joo@technion.ac.il}

\keywords{independent transversals, local and global degree conditions}
\subjclass[2020]{Primary: 05D15  Secondary: 05C63} 
 
\begin{document}
\begin{abstract}
A corollary of Hall's marriage theorem is that
  a sufficient condition for a list  $(V_1, \ldots ,V_m)$ of  sets  to  have a system of distinct representatives is 
 that  $|V_i|\ge deg_{\{V_1, \ldots ,V_m\}}(v)$ for every $i\in [m]$ and $v \in \bigcup_{i\in [m]}V_i$. This we dub a {\em global} condition. A 
 folklore result is that a {\em local} condition  -  that the inequality holds for pairs $i,v$ for which $v \in V_i$ - suffices. These are special cases of a 
 general
 type of results - large sets, whose elements are sparse in some sense, have a system of representatives that is independent in a related graph. We 
 study  two such scenarios, in both of which each $V_i$ is replaced by a  $k$-uniform hypergraph $H_i$, the representatives are hyperedges, and 
 distinctness is replaced by disjointness. In one setting the sparsity is measured by the degrees of vertices in the hypergraphs,  in the other by the 
 degrees of vertices in the line graph.  
 The proofs use the 
 topological version of Hall's theorem. In particular, we shall use a lower bound on the topological connectivity of the independence complex of a 
 graph, defined by vector representations.   We also provide short proofs of the local infinite version, 
 known as the ``Milner-Shelah theorem''.
 
\end{abstract}
\maketitle

\section{Introduction and summary}

        The protagonists  of this  paper are  choice functions, also known as ``transversals''. Let $\cv=(V_1, \ldots ,V_m)$ be an $m$-tuple of not 
        necessarily disjoint sets. A function $f: [m] \to \bigcup_{i\in [m]} V_i$ is a {\em choice  function} of $\cv$ if $f(i) \in V_i$ for all $i\in [m]$. 
        An injective 
        choice function is called an \textit{SDR} (``system of distinct representatives'').  If its image belongs to a given complex $\C$ we call it a 
        $\C$-{\em transversal}. For a graph $G$ let $\ci(G)$ be the complex of independent sets in $G$. An $\ci(G)$-transversal is called an {\em 
        independent transversal}, IT for short. For sets $S_1, \ldots,S_m$ we denote by  $\dot{\bigcup}_{j \le m} S_j$  the multiset that is the union of 
        the $S_i$s, with multiplicity. If the  $H_i$ are {\em hypergraphs}, i.e., collections of nonempty sets, then a transversal $f:[m] \to \bigcup_{i\in 
        [m]}H_i$ with $f(i)\cap f(j)=\emptyset$ whenever $i\neq j$ is called an SJR (system of disjoint representatives).

We identify any multihypergraph (edges possibly repeating) with its multiset of edges. For a multihypergraph $H$ and $v \in V(H)$  let 
$deg_H(v)=|\{e \in H\mid v \in e\}|$. Let $\Delta(H)=\max\{deg_H(v) \mid v\in V(H)\}$ and for a set $S$ of vertices, let $\Delta_H(S)= \max_{s 
\in S}deg_H(s)$. So, $\Delta(H)=\Delta_H(V(H))$.
The  {\em matching complex} (sets of disjoint edges) in $H$ is denoted by $\cm(H)$. Given a multihypergraph $H$, the {\em line-graph} $L(H)$ 
of $H$ is a graph whose vertices are the hyperedges of $H$ (with multiplicity) and two vertices are connected if the corresponding hyperedges meet.

The theme of the paper is ``large sets of sparse elements have a $\C$-transversal''. Here ``large'' is simply in the counting sense. Sparsity, on the 
other hand, can have one of a few meanings. 
\begin{enumerate}[label=(\Roman*)]
    \item \label{item: small DeltaV} Having small $\Delta(\V)$.
    \item\label{item: small DeltaG}  When $\C=\ci(G)$, having small $\Delta(G)$.
    \item\label{item: small DeltaHyper} When the $H_i$s are  hypergraphs, having small $\Delta(\dot{\bigcup}_{i\in [m]}H_i)$.
    \item\label{item: small DeltaL} When the $H_i$s are  hypergraphs, having small $\Delta(L(\dot{\bigcup}_{i\in [m]}H_i))$.
\end{enumerate}

Yet another distinction to be made: in the condition on size vs. sparsity there are {\em global} vs. {\em local} conditions. In the global case the size 
of all $V_i$ is compared with the sparsity of all elements. In the local case the size of each $V_i$ is compared with the sparsity of its own  
elements.  We prove global and local results  when the hypergraphs $V_i$ are $k$-uniform for fixed $k$. We also study such results for special 
classes of graphs.

\section{Tools}

A \emph{simplicial complex} $\C$ on vertex set $V$ is a collection of subsets of $V$ that is closed-down, i.e., if $\sigma\in \C$, then $\tau\in \C$ 
for every $\tau\subseteq \sigma$. The elements of $\C$ are called 
\emph{simplices}.  Let 
     $\C$  be a  complex on
 $V:=V_{[m]}$. A $\C$-transversal is a transversal, whose image belongs to $\C$. 
For a multihypergraph $H$, the independence complex $\mathcal{I}(H)$ of $H$ consists of those  $I\subseteq V(H)$ for which there is no $e\in 
H$ such that $e \subseteq I$. An  
 $\ci(H)$-transversal is called an {\em independent transversal}, \textit{IT} for short.  The {\em matching complex} $\cm(H)$ of $H$ has as 
 simplices the matchings of $H$ (i.e. sets of pairwise disjoint hyperedges). Note that $\cm(H)=\ci(L(H))$.

 Let $\mathcal{H}=(H_1,\dots, 
 H_m)$, 
 where  each $H_i $  is a $k$-uniform hypergraph and let $H:=\dot{\bigcup}_{i\in [m]}H_i$. Then an SJR (system of disjoint representatives) is an 
 $\cm(H)$-transversal. (Another frequently used term is ``rainbow matching'').
 For a vertex $v$ we denote by $deg_H(v)$ the number of edges of $H$ containing $v$. Let $\Delta(H)=\max_{v \in V(H)}deg_H(v)$.
The matching number and fractional matching number of a hypergraph $H$ are denoted by $\nu(H)$ and $\nu^{*}(H)$, respectively.

The main tool we shall use throughout the paper is the so-called ``topological Hall 
 theorem''. It uses a connectivity parameter denoted by $\eta$, which is the customary connectivity parameter  in topology, +2 (the addition of $2$ 
 simplifies the statement of results). For a simplicial complex 
$\C$, $\eta(\C)$ is $1$ plus the minimal integer $m$ such that the $m$-th homology group of $\C$ is non-trivial.   In homotopic (for us - 
combinatorial) terms, rather than homological, $\eta(\C)$ is the maximal $m$ for which any copy of a sphere of any dimension $d-1\le m-1$ in 
$\C$ can be filled with simplices of size at most $d$ - this may give some clue as to the combinatorial significance of the notion. 
For example, \begin{enumerate}
    \item 
$\eta(\C)\ge 2$ if and only if $\C$ is non-empty and connected, 
meaning that any copy $S$ of $S^0$ (two vertices) can be completed to a path with boundary $S$ by simplices, the size of each of which is $2$.
\item
$\eta(\C)=\infty$ whenever $\C$ is contractible.
\item $\eta(S^k)=k+1$. 
\end{enumerate}
Given a collection $\mathcal{V}=(V_1,\dots,V_m)$ of vertex sets and $J \subseteq [m]$, let  
        $V_J=\bigcup_{j \in J} V_j$.
\begin{theorem}[Topological  Hall]\label{tophall}
    Let $\C$ be a complex on the finite set $V=\bigcup_{i \in [m]}V_i$. If $\eta(\C[V_J]) \ge |J|$ for every $J \subseteq 
    [m]$ then there exists 
    a 
    $\C$-transversal.
\end{theorem}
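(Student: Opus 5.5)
The plan is to follow the standard proof of the Topological Hall theorem (Aharoni--Haxell, Meshulam): build a simplicial map from a suitable complex onto a simplex and apply Sperner's lemma in its KKM or nerve form. A cleaner route, which I will take, goes through the \emph{deleted join} or \emph{colored complex}. Let $\Delta^{m-1}$ be the simplex on vertex set $[m]$. Consider the map $\lambda: V\to [m]$, where each $v$ is labelled by some $i$ with $v\in V_i$. If the sets overlap, first replace $V$ by the disjoint union $\tilde V=\{(v,i): v\in V_i\}$ and pull $\C$ back along the projection $\tilde V\to V$. The pulled-back complex $\tilde\C$ consists of those sets whose projection is a simplex of $\C$ and is injective on first coordinates. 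This reduction has to preserve the hypothesis $\eta(\tilde\C[\tilde V_J])\ge |J|$. Preserving it is nontrivial, so instead I will keep the labelling $\lambda$ many-valued and argue directly on $\C$.

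The core is an inductive construction of a continuous map $g$ from the boundary of a subdivision of $\Delta^{m-1}$ into $\C$, built skeleton by skeleton.
\begin{enumerate}
\item For each face $\sigma_J$ of $\Delta^{m-1}$ with vertex set $J\subseteq[m]$, we want $g$ to send $\sigma_J$ into $\C[V_J]$.
\item For vertices ($|J|=1$) this needs $\C[V_j]\neq\emptyset$, which is exactly $\eta\ge 1$.
\item Suppose $g$ is already defined on $\partial\sigma_J$, a $(|J|-2)$-sphere mapped into $\C[V_J]$. Since $\eta(\C[V_J])\ge |J|$, this complex is $(|J|-2)$-connected, so the map extends over $\sigma_J$. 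By simplicial approximation we can take the extension simplicial on a fine subdivision $T$ of $\sigma_J$, and we can refine so that the earlier parts stay simplicial.
\end{enumerate}
At the end we have a triangulation $T$ of $\Delta^{m-1}$ and a simplicial map $g: T\to\C$ with $g(\sigma_J)\subseteq \C[V_J]$ for every face.

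Next we label each vertex $x$ of $T$ by some $\ell(x)\in J$ with $g(x)\in V_{\ell(x)}$, where $\sigma_J$ is the minimal face of $\Delta^{m-1}$ containing $x$. Such an $\ell(x)$ exists because $g(x)\in V_J$. This is a Sperner labelling, so Sperner's lemma gives a fully labelled simplex $\tau$ of $T$ with vertices $x_1,\dots,x_m$ and $\ell(x_i)=i$. Define $f(i)=g(x_i)$. Then $f(i)\in V_i$. The image $g(\tau)=\{f(1),\dots,f(m)\}$ is a simplex of $\C$ because $g$ is simplicial, so $f$ is a $\C$-transversal. Points may coincide, but the definition only requires the image to lie in $\C$, so this causes no problem.

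The main obstacle is making the connectivity-to-extension step rigorous. We need the equivalence between $\eta(\C[V_J])\ge |J|$ and the extendability of maps from $S^{|J|-2}$, the homotopic reading stated before the theorem, and we need simplicial approximation that is compatible across faces. The first thing to try is to carry out the extensions in order of increasing $|J|$. After each extension, take a common subdivision that leaves the simplicial structure on lower faces fixed, using relative simplicial approximation. If homology is used instead of homotopy, the same scheme works with chain maps. In that version the Sperner step is replaced by a degree argument, checking that the top chain maps to $\partial\Delta$ with the correct boundary under $\ell$.
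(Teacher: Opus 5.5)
The paper gives no proof of this theorem; it only cites \cite{aharoni2000hall} and \cite{meshulam2001clique}. Your main line is the standard Aharoni--Haxell argument: extend over the faces $\sigma_J$ in order of $|J|$ using the $(|J|-2)$-connectivity of $\C[V_J]$, then apply Sperner to a fine triangulation. It is correct when $\eta$ is read homotopically.

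The step you flag as the main obstacle is not one. First build the continuous map $g\colon\Delta^{m-1}\to\|\C\|$ on all faces, and only then take a single simplicial approximation $h\colon T\to\C$. Since $g(x)$ lies in the open star of $h(x)$, the vertex $h(x)$ belongs to the carrier simplex of $g(x)$. For $x\in\sigma_J$ that carrier lies in the subcomplex $\C[V_J]$, so $h(x)\in V_J$ automatically. No relative approximation is needed, and your labelling and Sperner step go through with $h$ in place of $g$.

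The genuine gap is the homological case. That is the paper's primary definition of $\eta$, and the homotopic proof does not cover it, since vanishing homology does not force $\pi_1=0$. Your one-line sketch does not work as stated. Without a triangulation, nothing carries a many-valued labelling. The degree argument needs a single colouring $\lambda$ of the vertices of $\C$ with $\lambda(v)\in J$ for every vertex in the chain over $\sigma_J$. This is impossible if some $v\in V_1\cap V_2$ is used over both $\sigma_{\{1,3\}}$ and $\sigma_{\{2,3\}}$.

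The fix is the reduction you abandoned, but \emph{without} the injectivity requirement. Put $\tilde V_i=V_i\times\{i\}$ and $\tilde\C=\{A\subseteq\tilde V:\pi(A)\in\C\}$. With injectivity the hypothesis genuinely fails: for $\C=\{\emptyset,\{v\}\}$ and $V_1=V_2=\{v\}$ you would get two isolated points, although $f(1)=f(2)=v$ is a $\C$-transversal. Without it, take a section $s$ of $\pi$ with values in $\tilde V_J$. Then $\pi s=\mathrm{id}$, and $s\pi$ is contiguous to the identity because $\pi(A\cup s\pi(A))=\pi(A)\in\C$. Hence $\tilde\C[\tilde V_J]\simeq\C[V_J]$ and the hypothesis transfers.

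Now $\lambda(v,i)=i$ is well defined, and the argument runs as follows:
\begin{enumerate}
\item Use $\tilde H_{|J|-2}(\tilde\C[\tilde V_J])=0$ to build a chain map $\phi$ with $\phi(\sigma_J)$ supported in $\tilde\C[\tilde V_J]$.
\item Check by induction on $|J|$ that $\lambda_\#\phi(\sigma_J)=\sigma_J$, since $C_{|J|-1}(2^J)$ is spanned by $\sigma_J$.
\item Read off a rainbow simplex $\{(v_1,1),\dots,(v_m,m)\}$ in the support of $\phi(\sigma_{[m]})$; this gives the transversal $f(i)=v_i$.
\end{enumerate}
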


This  was implicitly proved in \cite{aharoni2000hall}, and was explicitly formulated by the first author, as cited in
\cite{meshulam2001clique}.

\begin{remark}
    All that is written here applies both to homological and homotopic connectivity (that are anyway the same whenever $\eta\ge 3$, by a theorem of 
    Hurewicz).
\end{remark}
To apply the theorem, we need combinatorially-formulated lower bounds on $\eta$. When the complex is $\ci(G)$ for some graph $G$, most 
known lower bounds are  formulated in terms of {\em domination numbers}. The neighbourhood of a set $S$ of vertices in a graph $G$ is denoted 
by $N_G(S)$.  A set $S$ is said to {\em dominate} a set $T$ if $N_G(S)\cup S \supseteq T$. A set dominating $V(G)$ is plainly said to be {\em 
dominating}.
It is said to be {\em totally dominating} if $\bigcup_{s \in S}N_G(s)=V(G)$.

Here are six parameters that are useful for defining lower bounds on $\eta$:

\begin{enumerate}
    \item $\gamma(G)$ is the minimal size of a dominating set of vertices, namely a set $S$ for which $N_G(S)\cup S=V(G)$. 
    \item $\gamma_t(G)$ is the minimal size of a totally dominating set of vertices, namely a set $S$ for which $\bigcup_{s \in S}N_G(s)=V(G)$. 
    \item $\gamma^i(G)$ - the maximum, over all independent sets $I$, of the minimal size of a set $S$ dominating $I$.
    \item $\gamma_i^i(G)$ - the maximum, over all independent sets $I$, of the minimal size of an {\em independent} set $S$ dominating $I$. 
    Obviously, $\gamma_i^i(G) \ge \gamma^i(G)$.

        \item $\Gamma(G)$ - the vector representation domination number of $G$. Being our main tool, we devote to it below a special subsection. 
         \item $\lambda (L(G))$, the largest eigenvalue of the Laplacian of the graph.

        \end{enumerate}
        The relationship with $\eta$ is given by:

\begin{theorem}\label{inequalities}\hfill
    \begin{enumerate}[label=(\alph*)]
        \item\label{item: gammat} $\eta(\ci(G))\ge \frac{\gamma_t(G)}{2}$.

        \item\label{item: gammaii} $\eta(\ci(G))\ge \gamma_i^i(G)$ (implicit  in \cite{aharoni2000hall}).

        \item\label{item: GammaG} $\eta(\ci(G))\ge \Gamma(G)$. (\cite{aharoni2005eigenvalues})

        \item\label{item: frac} $\eta(\ci(G))\ge \frac{|V(G)|}{\lambda (L(G))}$ (\cite{aharoni2005eigenvalues}).
    \end{enumerate}
\end{theorem}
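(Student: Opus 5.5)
Theorem \ref{inequalities} collects four lower bounds; (c) and (d) are quoted from \cite{aharoni2005eigenvalues} and (b) is essentially implicit in \cite{aharoni2000hall}. So the plan is to give self-contained arguments for (a) and (b) and to cite (c) and (d). Both (a) and (b) will be proved by induction on $|V(G)|$, using the standard link/deletion splitting. For a vertex $v$ let $G-v$ be the graph with $v$ removed and $G-N[v]$ the graph with the closed neighbourhood removed. Then $\ci(G)=\ci(G-v)\cup (v * \ci(G-N[v]))$, and the two pieces meet in $\ci(G-N[v])$. By the Mayer--Vietoris sequence (or the gluing lemma for homotopy connectivity) this gives
$$\eta(\ci(G))\ge \min\{\eta(\ci(G-v)),\ \eta(\ci(G-N[v]))+1\}.$$
More generally, for an edge $uv$ we can remove the edge and use $\ci(G)=\ci(G\setminus uv)\setminus\{\sigma\supseteq\{u,v\}\}$. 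This yields
$$\eta(\ci(G))\ge\min\{\eta(\ci(G\setminus uv)),\ \eta(\ci(G-N[u]-N[v]))+1\}.$$
This is the form Meshulam uses.

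For (a), I would first dispose of the case of an isolated vertex $v$: then $\ci(G)$ is a cone and $\eta=\infty$. Otherwise, if $G$ has no edges at all, a vertex exists only in the isolated case. In general, pick an edge $uv$ and apply the edge recursion. Deleting an edge can only increase $\gamma_t$ of the remaining graph (restricted appropriately), so the first term is handled by induction on the number of edges. For the second term, any total dominating set of $G-N[u]-N[v]$ together with $\{u,v\}$ totally dominates $G$. Hence $\gamma_t(G-N[u]-N[v])\ge\gamma_t(G)-2$, and induction gives $\eta\ge \gamma_t(G)/2-1+1$.

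One subtlety remains: deleting an edge may create isolated vertices, so that $\gamma_t$ becomes undefined. I would use the convention $\gamma_t=\infty$ in that case, which is consistent with $\eta=\infty$ for cones. Monotonicity under edge deletion then holds, since a total dominating set of a spanning subgraph also totally dominates $G$.

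For (b), use the vertex recursion with a carefully chosen $v$, again inducting on $|V|$. Take $I$ and a minimal independent dominator realizing the parameter. Then $\gamma_i^i(G-v)\ge$ ? — here I expect trouble, because $\gamma_i^i$ is not obviously monotone under vertex deletion. I would instead prove the stronger statement $\eta(\ci(G))\ge \gamma(G)$-type bounds restricted to independent sets, following \cite{aharoni2000hall}. Concretely, I would show that if every independent set $I$ needs $\ge k$ independent vertices to dominate it, then one can choose $v$ so that both $G-v$ and $G-N[v]$ retain the needed values, $k$ and $k-1$ respectively. For $G-N[v]$: an independent set $I'$ of $G-N[v]$ can be padded with $v$, and any independent dominator of $I'$ within $G-N[v]$ plus $v$ dominates $I'\cup\{v\}$. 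This gives the bound $k-1$. For $G-v$, choose $v$ non-isolated and use that independent sets and their dominators in $G-v$ remain such in $G$, excluding those that use $v$. This step, ensuring that no dominator needs $v$, is the main obstacle. I would first try to pick $v$ inside $N(w)$ for a vertex $w$ with a suitable extremal property, or else fall back on citing \cite{aharoni2000hall}.

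Parts (c) and (d) I would simply quote from \cite{aharoni2005eigenvalues}. Their proofs use a similar recursion together with a convexity/vector argument, and reproducing them is not needed.
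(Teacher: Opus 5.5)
\textbf{Gap: part (b) is left unfinished.} The paper gives no proof of this theorem. It only collects known bounds, attributing (b)--(d) to \cite{aharoni2000hall} and \cite{aharoni2005eigenvalues}, so quoting (c) and (d) is exactly what the paper does.

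Your proof of (a) is correct and is essentially Meshulam's edge-recursion argument. The edge recursion holds because the simplices of $\ci(G\setminus uv)$ containing $\{u,v\}$ form $\{u,v\}*\ci(G-N[u]-N[v])$, which meets $\ci(G)$ in the suspension of $\ci(G-N[u]-N[v])$. The inequality $\gamma_t(G\setminus uv)\ge\gamma_t(G)$ holds with the convention $\gamma_t=\infty$ when there is an isolated vertex. The set $S'\cup\{u,v\}$ totally dominates $G$ because $N[u]\cup N[v]=N(u)\cup N(v)$ when $uv\in E(G)$. Induction on the number of edges then closes it, with edgeless base cases ($\eta=\infty$, or $\eta(\{\emptyset\})=0=\gamma_t$).

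In (b), the obstacle you single out is not real, and your $G-N[v]$ step runs in the wrong direction.
\begin{itemize}
\item For the $G-v$ term you do not need to ensure that no dominator uses $v$. Deleting $v$ only removes candidate dominators, so dominating in $G-v$ can only get harder. All you need is $v\notin I$.
\item Padding an arbitrary independent set $I'$ of $G-N[v]$ with $v$ gives a lower bound only if $I'\cup\{v\}$ is hard to dominate in $G$. That requires $v$ to lie in an extremal $I$, which conflicts with $v\notin I$.
\end{itemize}

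\textbf{The fix.} Let $k=\gamma_i^i(G)$, and let $I$ be an independent set all of whose independent dominators have at least $k$ vertices. If $G$ has no edges, $\ci(G)$ is a simplex or $\{\emptyset\}$ and there is nothing to prove. Otherwise $I\neq V(G)$; take any $v\notin I$.
\begin{itemize}
\item $I$ is independent in $G-v$. Every independent set of $G-v$ dominating $I$ there is also an independent set of $G$ dominating $I$. Hence $\gamma_i^i(G-v)\ge k$.
\item Put $I'=I\setminus N(v)$, which is independent in $G-N[v]$. Suppose $S'$ is independent in $G-N[v]$ and dominates $I'$. Then $S'\cup\{v\}$ is independent in $G$, since $S'\cap N[v]=\emptyset$. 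It also dominates $I$, since $v$ covers $I\cap N(v)$. So $|S'|\ge k-1$, i.e.\ $\gamma_i^i(G-N[v])\ge k-1$.
\end{itemize}
Your vertex recursion and induction on $|V(G)|$ now give $\eta(\ci(G))\ge\min\{k,(k-1)+1\}=k$. This is a self-contained proof of (b), as short as your proof of (a), with no need to fall back on \cite{aharoni2000hall}.
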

        \subsection{Vector representation of graphs.}
        A {\em vector representation} of $G$ is an assignment of a vector $P(v)$ in some fixed $\mathbb{R}^n$ to each  vertex $v$, satisfying the 
        condition that
        $P(x)P(y) \ge 1$ whenever $xy\in E(G)$ and  $P(x)P(y) \ge 0$ otherwise. A vector representation can be considered as a matrix $P$, whose 
        rows are the vectors $P(v)$ in some ordering.
        
        If $G$ is  the line graph of a hypergraph  $H$  then the canonical representation in $\mathbb{R}^{V(H)}$ maps each vertex (i.e. hyperedge) 
        to its characteristic vector.  Any graph $G$ is the line graph of the hypergraph of its stars (viewed as sets of edges). By denoting 
        $\mathsf{star}_G(v)$ the set of edges incident with $v$, a canonical representation of a general graph $G$ can be defined by 
        $P(v)=\chi_{\mathsf{star}_G(v)} \in \mathbb{R}^{E(G)}$ for $v\in V(G)$. 
        
        A vector $\vec{\alpha}\in \mathbb{R}_{+}^{V}$ is said to  {\em dominate}  a representation $P$ if $(\sum_{v\in V}\alpha(v)P(v))\cdot 
        P(u)\ge 1$ for every $u \in V$. We define $\Gamma(G)$ to be the supremum, over all representations $P$ of $G$, of $\min\{\vec{\alpha}\cdot 
        \vec{1} \mid ~\vec{\alpha}  \text {~dominates ~} P\}$. Let $Adj(G)$ be the adjacency matrix of $G$. 
        An equivalent definition is 
        \[\Gamma(G)=\sup_{B~ p.s.d, B\ge Adj(G)}
        \min\{\vec{1}\cdot \vec{\alpha} \mid \vec{\alpha} B \ge \vec{1}, \vec{\alpha} \geq \vec{0}\}, \]
        where p.s.d. stands for positive semidefinite.
        By linear programming duality we also have 
        \begin{equation}\label{Gamma}
           \Gamma(G)=\sup_{B~ p.s.d, B\ge Adj(G)}\max
           \{\vec{1}\cdot \vec{\alpha} \mid \vec{\alpha} B \le \vec{1}, \vec{\alpha} \geq \vec{0}\}. 
        \end{equation} \hfill

       (see \cite{zewi2012vector}).

Summarizing \ref{item: GammaG}:

    \begin{equation}
           \begin{gathered}
    \eta(\ci(G))\ge \sup_{P \text{~is a representation of }G}\max\{\vec{\alpha}\cdot \vec{1} \mid \vec{\alpha}PP^T \le \vec{1},\ \vec{\alpha} \geq 
    \vec{0} \}=\\ \newline \sup_{P \text{~is a representation of }G}\min\{\vec{\alpha}\cdot \vec{1} \mid \vec{\alpha}PP^T \ge \vec{1},\ 
    \vec{\alpha} \geq \vec{0} \}. 
    \end{gathered}\end{equation}
    

In \cite{aharoni2000hall} the following was implicitly proved:
\begin{theorem}\label{gammailine}
    If $H$ is a $k$-uniform multihypergraph then $\eta(\mathcal{M}(H))\ge \nu(H)/k$. 
\end{theorem}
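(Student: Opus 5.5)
Since $\cm(H)=\ci(L(H))$, I would use part~\ref{item: gammaii} of Theorem~\ref{inequalities} and show that $\gamma_i^i(L(H))\ge \nu(H)/k$. By definition of $\gamma_i^i$ it suffices to exhibit one independent set $I$ of $L(H)$ such that every independent set $S$ of $L(H)$ dominating $I$ satisfies $|S|\ge \nu(H)/k$. Independent sets of $L(H)$ are exactly matchings of $H$. So I take $I=M$, a maximum matching of $H$ with $|M|=\nu(H)$.

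Let $S$ be a matching of $H$ that dominates $M$ in $L(H)$. Then every edge $e\in M$ either belongs to $S$ or meets some edge of $S$. Each edge $f\in S$ has $k$ vertices. The edges of $M$ are pairwise disjoint, so $f$ meets at most $k$ edges of $M$; if $f\in M$ it meets only itself. Hence the at most $k|S|$ edges of $M$ dominated by $S$ must cover all of $M$, which gives $\nu(H)=|M|\le k|S|$. Therefore $\gamma_i^i(L(H))\ge \nu(H)/k$, and hence $\eta(\cm(H))\ge \nu(H)/k$.

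The only point needing care is multiplicity, since $H$ is a multihypergraph and $L(H)$ has one vertex per copy of an edge. Two copies of the same edge intersect, so they are adjacent and cannot both lie in $M$. The bound ``an edge meets at most $k$ edges of a matching'' therefore still holds. I expect no real obstacle beyond invoking part~\ref{item: gammaii}. Independence of $S$ is not even used, so the same argument bounds $\gamma^i$ as well.
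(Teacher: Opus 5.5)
Your proof is correct. It identifies $\cm(H)$ with $\ci(L(H))$ and takes a maximum matching $M$ as the independent set. Any set $S$ of edges dominating $M$ in $L(H)$ satisfies $|M|\le|\bigcup S|\le k|S|$, and multiplicities cause no trouble. Part \ref{item: gammaii} of Theorem \ref{inequalities} then gives $\eta(\cm(H))\ge\nu(H)/k$.

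The paper does not write out a proof of this statement. It only attributes it, like part \ref{item: gammaii} itself, to Aharoni--Haxell, and your derivation is presumably that implicit argument.

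The only related argument the paper carries out is for the stronger Theorem \ref{gammailine extra}, and it uses part \ref{item: GammaG} instead of part \ref{item: gammaii}. There one takes a nonnegative $\vec\alpha$ with $\vec\alpha PP^T\ge\vec 1$ for the canonical representation $P$; this dominates all of $L(H)$, not just one matching. Then $\vec\alpha P$ is a fractional cover of $H$ of total weight $k\,\vec\alpha\cdot\vec 1$, so $\nu^*(H)=\tau^*(H)\le k\,\vec\alpha\cdot\vec 1$. Your statement follows from this because $\nu^*\ge\nu$.

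The two arguments are the integral and fractional versions of the same count: ``an edge meets at most $k$ pairwise disjoint edges'' versus ``an edge contributes weight $k$ to the cover.'' Your route is more elementary once part \ref{item: gammaii} is granted, and it works even for non-independent dominating sets. However, it only sees integral matchings.

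The $\Gamma$ route is what the paper actually needs for Theorem \ref{coarseglobal}. There the degree condition only supplies the fractional matching with uniform weights $1/\Delta(\mathcal H_J)$, and $\nu(\mathcal H_J)$ can be strictly smaller than $\nu^*(\mathcal H_J)$.
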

 Applying \ref{item: GammaG} to the canonical representation of $L(H)$  yields the stronger: 

\begin{theorem}[\cite{aharoni2005eigenvalues}]\label{gammailine extra}
    If $H$ is a $k$-uniform multihypergraph then $\eta(\mathcal{M}(H))\ge \nu^*(H)/k$.\end{theorem}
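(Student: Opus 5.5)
We derive the statement from Theorem~\ref{inequalities}\ref{item: GammaG} applied to $G=L(H)$, using $\cm(H)=\ci(L(H))$ and the canonical representation. Each vertex of $L(H)$, i.e.\ each hyperedge $e\in H$ counted with multiplicity, is sent to $P(e)=\chi_e\in\mathbb{R}^{V(H)}$. This is a valid vector representation. Indeed, $\chi_e\cdot\chi_f=|e\cap f|\ge 1$ whenever $e,f$ meet, that is, whenever they are adjacent in $L(H)$, and $\chi_e\cdot\chi_f\ge 0$ always. Two parallel copies of the same edge meet, so they are adjacent in $L(H)$, and the inner product is $k\ge1$, consistent with the definition. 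Hence $\eta(\cm(H))\ge \max\{\vec\alpha\cdot\vec 1\mid \vec\alpha PP^T\le \vec 1,\ \vec\alpha\ge \vec 0\}$ by the max-form of \eqref{Gamma}.

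The key step is to produce a feasible $\vec\alpha$ of total weight $\nu^*(H)/k$. Let $w:H\to\mathbb{R}_+$ be an optimal fractional matching, so that $\sum_{e\ni v}w(e)\le 1$ for every vertex $v$ and $\sum_e w(e)=\nu^*(H)$. Set $\alpha(e)=w(e)/k$. Then for each $f\in H$,
\[
(\vec\alpha PP^T)(f)=\sum_{e\in H}\alpha(e)|e\cap f|=\frac1k\sum_{v\in f}\sum_{e\ni v}w(e)\le \frac1k\cdot|f|=1,
\]
because $|f|=k$ and each inner sum is at most $1$. So $\vec\alpha$ is feasible, and $\vec\alpha\cdot\vec 1=\nu^*(H)/k$. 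This gives $\eta(\cm(H))\ge\nu^*(H)/k$.

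There is little real obstacle here. The only points to check are bookkeeping. First, multiplicities are handled correctly: the fractional matching is defined on the multiset of edges, and parallel edges are distinct vertices of $L(H)$. Second, the max form of $\Gamma$ in \eqref{Gamma} is exactly the LP used above, with $B=PP^T$ positive semidefinite and $B\ge Adj(L(H))$ entrywise. Finally, since $\nu^*(H)\ge\nu(H)$, this also recovers Theorem~\ref{gammailine}.
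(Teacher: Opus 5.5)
Your proof is correct, and it uses the same framework as the paper: Theorem~\ref{inequalities}\ref{item: GammaG} applied to the canonical representation $P(e)=\chi_e$ of $L(H)$. The difference is that you work on the opposite side of the LP duality.

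The paper uses the minimization (domination) form. It takes an optimal nonnegative $\vec{\alpha}$ with $\vec{\alpha}PP^T\ge\vec{1}$, notes that $\vec{q}=\vec{\alpha}P$ is a fractional cover of $H$, and concludes $\nu^*(H)=\tau^*(H)\le\vec{q}\cdot\vec{1}=k\,\vec{\alpha}\cdot\vec{1}$. So it has to bound every dominating vector, and it invokes the hypergraph duality $\nu^*=\tau^*$.

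You instead use the maximization form in \eqref{Gamma}, so you only need to exhibit one feasible vector, namely $\vec{\alpha}=w/k$ for an optimal fractional matching $w$, checked by a one-line double count. The LP duality is then hidden in the equivalence of the two forms of $\Gamma$ already recorded in \eqref{Gamma}.

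Your route is slightly more direct. It is also exactly the mechanism the paper uses later in the proof of Theorem~\ref{coarselocal}, where $\vec{\alpha}(e)=1/(k\Delta_H(\bigcup H_i))$ is just $1/k$ times a fractional matching. It likewise shows that the proof of Theorem~\ref{coarseglobal} could plug its uniform fractional matching straight into $\Gamma$. The paper's route has the minor advantage of working with the defining (domination) form of $\Gamma$, and it shows directly that every dominating vector for this representation has weight at least $\tau^*(H)/k$.
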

\begin{proof}
Let $G=L(H)$ and let $P$ be the canonical representation of $G$ written as a matrix.  
Let $\vec{\alpha}$ be a non-negative row vector on the hyperedges of $H$ satisfying $\vec{\alpha}\cdot PP^T \ge \vec{1}$
that minimizes $\vec{\alpha}\cdot \vec{1}$. Denote $t$ this minimum.
By the definition of $\Gamma$ as a supremum over all representations, and by Theorem \ref{inequalities} \ref{item: GammaG}, it suffices to prove 
that
$ t \ge \nu^*(H)/k$.
 By the definition of $P$, the vector $\vec{q}=\vec{\alpha} P$ is a fractional cover of $H$, so $\nu^*(H)=\tau^*(H) \le \vec{q}\cdot \vec{1}$. 
 Since each row of $P$ contains exactly $k$ many $1$ entries, 
$\vec{q}\cdot \vec{1} \le  k\vec{\alpha} \cdot \vec{1}$. Combining these inequalities yields 
$t \ge \nu^*(H)/k$, proving  the theorem. 
\end{proof}


Another tool we shall use is a theorem of Caro and (independently) Wei. Below $\alpha(G)$ is the maximal size of an independent set in a graph 
$G$. 
    
\begin{theorem}[\cite{wei1981lower, caro1979new}]\label{carowei}
 In any  graph $G$, 
\[ \alpha(G) \geq  \sum_{v\in V} \frac{1}{deg_G(v)+1}. \]
\end{theorem}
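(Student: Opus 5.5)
We prove the Caro--Wei bound by the standard random-ordering argument. Choose a uniformly random linear order $\pi$ of $V=V(G)$. Let $I_\pi$ be the set of vertices $v$ that precede all of their neighbours in $\pi$.

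First, $I_\pi$ is independent. If $u,v\in I_\pi$ were adjacent, then $u$ would precede $v$ and $v$ would precede $u$, which is impossible. Hence $\alpha(G)\ge |I_\pi|$ for every order $\pi$, and therefore $\alpha(G)\ge \mathbb{E}|I_\pi|$.

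Next we compute the expectation by linearity. For a fixed vertex $v$, the closed neighbourhood $\{v\}\cup N_G(v)$ has $deg_G(v)+1$ elements. Under a uniform random order, each of these elements is equally likely to come first. So
\[
\Pr[v\in I_\pi]=\frac{1}{deg_G(v)+1},
\]
and summing over $v\in V$ gives $\mathbb{E}|I_\pi|=\sum_{v\in V}\frac{1}{deg_G(v)+1}$. Since some order attains at least the expectation, the bound follows.

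There is no real obstacle. The only point needing care is the symmetry claim, which holds because the restriction of a uniform random order to any fixed subset is again uniform. The argument works for simple graphs, where $deg_G(v)=|N_G(v)|$, and that is the setting of the statement. A deterministic alternative is the greedy algorithm that repeatedly picks a vertex of minimum degree and deletes its closed neighbourhood, combined with convexity of $x\mapsto 1/(x+1)$. The probabilistic proof is shorter, so we use it.
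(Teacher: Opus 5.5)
Your argument is correct and complete. The paper gives no proof of this statement: it is quoted from Caro and Wei. The only justification offered is a heuristic for $d$-regular graphs, where $\chi(G)\le d+1$ forces a colour class of size at least $|V|/(d+1)$.

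Applied to an arbitrary graph, that colouring argument yields only $\alpha(G)\ge |V|/(\Delta(G)+1)$. That would not suffice for how the paper uses the result in the proof of Theorem~\ref{localrefined}, where the degrees $deg_L(e)$ vary and each edge must be charged its own weight $1/(deg_L(e)+1)$. Your random-ordering proof gives exactly this degree-sensitive bound.

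Your greedy alternative also works, but it needs only monotonicity, not convexity. If $v$ has minimum degree $d$, each vertex of its closed neighbourhood contributes at most $1/(d+1)$. So deleting that neighbourhood removes total weight at most $1$, and it does not decrease the weights of the remaining vertices. Convexity is only needed afterwards, to deduce the Turán-type bound $|V|/(\bar d+1)$ with $\bar d$ the average degree.

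Your restriction to simple graphs is harmless. The paper applies the theorem to $L(\mathcal H_J)$, which is a simple graph: parallel hyperedges become distinct adjacent vertices.
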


For intuition, note that in $d$-regular graphs this is obvious, in fact $\chi(G)\le d+1$, meaning that $V(G)$ can be covered by independent sets 
whose average size is $\frac{|V|}{d+1}$.

        

\section{SJRs - global and local conditions for their existence}
As already mentioned, Hall's theorem  implies
results of type \ref{item: small DeltaV}, in which the condition is on the size of the $V_i$s, compared with their popularity. 
 \begin{theorem}\label{thm: global weakest}
    If the sets $V_i$ are nonempty and $|V_i|\ge deg_{\cv}(v)$ for every $i\in [m]$ and $v \in \bigcup_{i\in [m]}V_i$ then $(V_1,V_2, 
    \ldots,V_m)$ has an SDR. 
\end{theorem}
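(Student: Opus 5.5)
The plan is to verify Hall's condition directly by double counting. Let $d=\max_{v}deg_{\cv}(v)$, where the maximum is over $v\in\bigcup_{i\in[m]}V_i$. Since every $V_i$ is nonempty, $d\ge 1$. The hypothesis then says $|V_i|\ge d$ for every $i\in[m]$.

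Fix $J\subseteq[m]$ and count the incidences $I_J=\{(j,v)\mid j\in J,\ v\in V_j\}$ in two ways.
\begin{itemize}
\item Counting by $j$ gives $|I_J|=\sum_{j\in J}|V_j|\ge d|J|$.
\item Counting by $v\in V_J$, each $v$ lies in at most $deg_{\cv}(v)\le d$ of the sets $V_j$, so $|I_J|\le d|V_J|$.
\end{itemize}
Combining the two bounds gives $d|J|\le d|V_J|$. Dividing by $d>0$ yields $|V_J|\ge|J|$, which is Hall's condition. Hall's marriage theorem then provides an SDR.

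One could also phrase this as a special case of Theorem \ref{tophall}, with $\C$ the complex of all subsets of $V$ (so $\eta=\infty$ on nonempty vertex sets), but that formulation essentially reduces to Hall's condition anyway. Classical Hall is therefore the cleaner route.

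No step is a real obstacle. The only point needing care is to use the global maximum degree $d$ as a single comparison number, and to use nonemptiness to ensure $d\ge1$ so the division is legitimate.
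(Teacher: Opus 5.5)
Your proof is correct and takes essentially the paper's route. The paper presents this statement as a direct corollary of Hall's theorem, and your double count with the uniform bound $d$ is the $k=1$ case of the argument for Theorem~\ref{coarseglobal}: there, weight $1/\Delta$ on every edge gives $\nu^*\ge |J|$, and you use classical Hall in place of Theorem~\ref{tophall}.
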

In bipartite graph terminology:  if in a bipartite graph with sides $M,W$ all degrees of vertices in $M$ are at least as large as all degrees in $W$, 
then there is a matching covering $M$. The translation to the set terminology is done by identifying each $m \in M$ with the set of its neighbours.

 A folklore result is that the local condition also suffices.

\begin{theorem}\label{localms}
    If the sets $V_i$ are nonempty and $|V_i|\ge deg_{\cv}(v)$ for every $i\in [m]$ and $v \in V_i$, then there exists an SDR. 
\end{theorem}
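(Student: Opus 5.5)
We verify Hall's condition by a weighted double count. Fix $J\subseteq [m]$ and consider the bipartite incidence structure between $J$ and $V_J$, with a pair $(i,v)$ present whenever $v\in V_i$. Give each incidence $(i,v)$ the weight $w(i,v)=1/|V_i|$. The plan is to compute the total weight $W=\sum_{i\in J}\sum_{v\in V_i} w(i,v)$ in two ways.

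First we sum over $i$. For each $i\in J$ the inner sum is $|V_i|\cdot \frac{1}{|V_i|}=1$, because the sets are nonempty and so the weights are defined. Hence $W=|J|$.

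Next we sum over $v\in V_J$. For a fixed $v$, the incidences $(i,v)$ with $i\in J$ all satisfy $v\in V_i$. The local hypothesis then gives $|V_i|\ge deg_{\cv}(v)$, so each such weight is at most $1/deg_{\cv}(v)$. The number of such $i$ is at most $deg_{\cv}(v)$, so the weight at $v$ is at most $deg_{\cv}(v)\cdot \frac{1}{deg_{\cv}(v)}=1$. Hence $W\le |V_J|$.

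Combining the two counts gives $|J|\le |V_J|$ for every $J\subseteq[m]$. Hall's theorem, which is the case $\C=2^V$ of Theorem \ref{tophall}, then yields an SDR.

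The only delicate point is that the hypothesis is local. It is applied only to the pairs $(i,v)$ with $v\in V_i$, and these are exactly the pairs that contribute weight, so the local condition is sufficient for the argument.
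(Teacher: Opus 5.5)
Your argument is correct. It is essentially the classical double-counting proof that the paper gives in the Milner--Shelah section: each man's edges get weight $1/deg_G(m)$, the local condition bounds this by $1/deg_G(w)$, and summing gives $|X|\le|N_G(X)|$; you have simply phrased it in set language. One small correction to your final citation: Hall's theorem is not the case $\C=2^V$ of Theorem \ref{tophall}, because for the full simplex $\eta(\C[V_J])=\infty$ whenever $V_J\neq\emptyset$, so that hypothesis never detects $|V_J|\ge|J|$. Cite Hall's theorem directly, or, in the paper's framework, obtain it as the $k=1$ matching-complex case underlying Theorem \ref{coarselocal}.
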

In graph terminology, this says that it suffices to assume $deg(m) \ge deg(w)$ whenever $mw$ is an edge with $m\in M$ and $w\in W$. 
Later on we shall meet three proofs of this fact, including a topological one (special case of Theorem \ref{coarselocal}).

Our first aim is to extend these results to  the case where the vertex sets $V_i$ are replaced by $k$-uniform hypergraphs $H_i$. The condition of 
distinctness is replaced by that of disjointness, so the aim is to guarantee the existence of SJRs. The following, global result, generalizes Theorem 
\ref{thm: global weakest}, which is its  $k=1$ case.

\begin{theorem}\label{coarseglobal}
    Let $\mathcal{H}=(H_1, \ldots, H_m)$, where each $H_i$ is a $k$-uniform hypergraph on the common vertex set $V$, and let 
    $H=\dot{\bigcup}_{j \in [m]} H_j$ (counting with multiplicity). If $|H_i| \ge k\Delta(H)$ for every $i \in [m]$ then there exists an SJR. 
\end{theorem}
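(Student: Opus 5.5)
We will apply Theorem \ref{tophall} to the complex $\C=\cm(H)$, where the ground set is the multiset of hyperedges of $H$ and the $i$-th set is the copy of $H_i$ inside $H$. Since $\cm(H)=\ci(L(H))$, an $\cm(H)$-transversal is exactly an SJR. So the whole task is to verify the connectivity condition
\[
\eta\Big(\cm\big(\textstyle\dot{\bigcup}_{j\in J}H_j\big)\Big)\ge |J| \qquad \text{for every } J\subseteq [m].
\]
The restriction of $\cm(H)$ to $V_J$ is the matching complex of the multihypergraph $H_J:=\dot{\bigcup}_{j\in J}H_j$, and $H_J$ is again $k$-uniform.

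To bound this connectivity we use Theorem \ref{gammailine extra}, which gives $\eta(\cm(H_J))\ge \nu^*(H_J)/k$. It therefore suffices to show $\nu^*(H_J)\ge k|J|$. For this we exhibit an explicit fractional matching: give every edge of $H_J$ weight $1/\Delta(H)$. Every vertex lies in at most $deg_{H_J}(v)\le deg_H(v)\le \Delta(H)$ edges of $H_J$, so the total weight at each vertex is at most $1$, and this is a legitimate fractional matching. Its total weight is
\[
\frac{|H_J|}{\Delta(H)}=\frac{\sum_{j\in J}|H_j|}{\Delta(H)}\ge \frac{|J|\,k\Delta(H)}{\Delta(H)}=k|J|.
\]
Here we use that the union is taken with multiplicity, so $|H_J|=\sum_{j\in J}|H_j|$. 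Hence $\eta(\cm(H_J))\ge |J|$, and Theorem \ref{tophall} produces an SJR.

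The only points needing care are the following.
\begin{enumerate}
\item The ground set of the complex must be the multiset of hyperedges, with repeated hyperedges coming from different $H_i$ treated as distinct vertices of $L(H)$. Theorem \ref{gammailine extra} is stated for multihypergraphs, so this is covered.
\item The case $\Delta(H)=0$ must be excluded. That would mean $H$ has no edges, so the hypothesis forces $m=0$, or else it is vacuous.
\end{enumerate}
No real obstacle is expected: the substantive work is contained in Theorem \ref{gammailine extra}, and the uniform-weight fractional matching does the rest.
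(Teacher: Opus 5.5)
Your proposal is correct and follows the paper's own proof. The uniform-weight fractional matching gives $\nu^*(\mathcal H_J)\ge k|J|$, Theorem \ref{gammailine extra} turns this into $\eta(\cm(\mathcal H_J))\ge |J|$, and Theorem \ref{tophall} finishes; the paper uses weight $1/\Delta(\mathcal H_J)$ rather than your $1/\Delta(H)$, which makes no difference.

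One small correction to your degenerate-case remark. If $\Delta(H)=0$, the hypothesis $|H_i|\ge 0$ holds trivially and does not force $m=0$. So for $m\ge 1$ with all $H_i$ empty the statement actually fails, and one must tacitly assume the $H_i$ are nonempty, as is done explicitly in Theorem \ref{thm: global weakest}.
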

\begin{proof}
    For $J \subseteq [m]$ we have $|\mathcal H_J|\ge k|J|\Delta(\mathcal H_J)$, where $\mathcal H_J$ denotes $\dot{\bigcup}_{j \in J} H_j$, i.e.,  
    the $k$-uniform multihypergraph that is the disjoint union of $H_j$ for $j\in J$. The fractional matching obtained by assigning weight 
    $\frac{1}{\Delta(\mathcal H_J)}$ on every edge of $\mathcal H_J$, shows that $\nu^*(\mathcal H_J) \ge k|J|$. Hence by Theorem 
    \ref{gammailine extra} 
    $\eta(\mathcal{M}(\mathcal H_J))\ge |J|$, yielding the sufficient condition  in Theorem \ref{tophall} for the existence of an SJR.
\end{proof}
    Another proof is via the stronger local result that generalizes Theorem \ref{localms}. It is stated and proved next.
    
\begin{theorem}\label{coarselocal}
  Let $\mathcal{H}=(H_1, \ldots, H_m)$, where each $H_i$ is a $k$-uniform hypergraph on the common vertex set $V$ and let 
  $H=\dot{\bigcup}_{j \in [m]} H_j$. If $|H_i|\ge k\Delta_H(\bigcup H_i)$ for every $i\in [m]$ then there exists an SJR.
\end{theorem}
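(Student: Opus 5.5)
The plan is to follow the proof of Theorem \ref{coarseglobal} almost verbatim, but to replace the uniform fractional matching with a \emph{locally weighted} one. By the topological Hall theorem (Theorem \ref{tophall}), applied to the complex $\cm(H)$ on the multiset of edges of $H$ with the $H_i$ as the sets to be represented, it suffices to show $\eta(\cm(\mathcal H_J))\ge |J|$ for every $J\subseteq[m]$. Here $\mathcal H_J=\dot{\bigcup}_{j\in J}H_j$, and edges are counted with multiplicity, so an edge lying in two $H_j$'s gives two vertices of the line graph. By Theorem \ref{gammailine extra} it is enough to prove $\nu^*(\mathcal H_J)\ge k|J|$. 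I will assume each $H_i$ is nonempty, since otherwise $\Delta_H(\bigcup H_i)$ is not meaningful and no SJR exists anyway.

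To build the fractional matching, fix $J$. For $j\in J$ write $D_j=\Delta_{\mathcal H_J}(\bigcup H_j)$, the maximum degree in $\mathcal H_J$ of a vertex covered by $H_j$. Then $D_j\ge 1$, because $H_j$ is nonempty and contributes to these degrees. To each copy of an edge $e$ coming from $H_j$, assign the weight $w(e)=1/D_j$.

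The first check is that this is a fractional matching. Fix a vertex $v$. Every edge of $\mathcal H_J$ that contains $v$ comes from some $H_j$ with $v\in\bigcup H_j$, so its weight is $1/D_j\le 1/deg_{\mathcal H_J}(v)$. There are exactly $deg_{\mathcal H_J}(v)$ such edge copies, so the load at $v$ is at most $1$.

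The second step bounds the total weight. Since $\mathcal H_J\subseteq H$, we have $D_j\le \Delta_H(\bigcup H_j)$. The hypothesis then gives $|H_j|/D_j\ge |H_j|/\Delta_H(\bigcup H_j)\ge k$, so the total weight is $\sum_{j\in J}|H_j|/D_j\ge k|J|$. Hence $\nu^*(\mathcal H_J)\ge k|J|$, and therefore $\eta(\cm(\mathcal H_J))\ge |J|$, which is what the topological Hall theorem needs to produce the SJR.

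The only delicate step is the vertex-load estimate. It is exactly where the local condition replaces the global one: the weight of an edge must be controlled by the degree of \emph{every} vertex it touches, and using the maximum degree over the whole vertex set $\bigcup H_j$ of its own hypergraph guarantees this. I also need to keep the degrees computed in the multiset $\mathcal H_J$ consistent with those in $H$, which is handled by the monotonicity $D_j\le\Delta_H(\bigcup H_j)$.
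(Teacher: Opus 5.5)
Your proof is correct and is essentially the paper's argument. Your weights $1/D_j$ form a fractional matching of $\mathcal H_J$ of total weight at least $k|J|$. The paper takes the same vector scaled by $\frac{1}{k}$, with $\Delta_H(\bigcup H_j)$ in place of your $D_j$, and plugs it directly into the maximization form of $\Gamma$ in part \ref{item: GammaG} of Theorem \ref{inequalities} (checking $\vec{\alpha}PP^T\le\vec{1}$), instead of passing through $\nu^*$ and Theorem \ref{gammailine extra}; by LP duality the two packagings are equivalent.
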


\begin{proof}  
 Let $J \subseteq [m]$ and $\ch_J:=\dot{\bigcup}_{j \in J}H_j$. Clearly,  $ \ch_J$ is a subhypergraph of $H$. Let $P$ be the canonical 
 representation of $L(\ch_J)$, i.e., $P(e)=\chi_e$, where $\chi_e$ denotes the characteristic function of $e\in \ch_J$. Note that 
 $\mathcal{I}(L(\ch_J))=\mathcal{M}(\ch_J)$.  Consider $P$ as a $ \ch_J \times V $  matrix whose $e$-th row is $P(e)$.
  Define a row vector $\vec{\alpha}\in \mathbb{R}^{\ch_J}$    by $\vec{\alpha}(e) = \frac{1}{k \Delta_H(\bigcup H_i)}$,  whenever $e \in H_i$. 
By the assumption of the theorem
$$\vec{\alpha} P \leq \frac{1}{k} \vec{1}_V$$ and since every row of $P$ has $k$ many $1$s, we have $\vec{\alpha} PP^T \leq \vec{1}$. 

By applying part \ref{item: GammaG} of Theorem \ref{inequalities} to $\mathcal{I}(L(\ch_J))$,  we conclude $$ \eta(\mathcal{M}(\ch_J)) \geq  
\vec{\alpha} \cdot \vec{1} = \sum_{i\in J} \frac{|H_i|}{k \Delta_H(\bigcup H_i)}.$$
By assumption,  $|H_i| \geq k \Delta_H(\bigcup H_i)$, hence  $\eta(\mathcal{M}(\mathcal H_J)) \geq |J|$. The theorem now follows by Theorem 
\ref{tophall}. 
\end{proof}

 \subsection{Sharpness}
Theorem \ref{coarseglobal}  is asymptotically sharp. The factor $k$ in the condition cannot be lowered to $k \left(1-\frac{1}{k^2} \right)$. The 
examples showing this are hypergraph versions of a construction due to Yuster 
\cite{yuster1997independent} 
and (independently) Jin \cite{jin1992complete},  generalized in \cite{szabo2006extremal}. These constructions can be summarized as follows:

\begin{theorem}\label{jyst}
Let 
 $G$ be the disjoint union of  
 \(2d-1\) copies $B_1, \ldots ,B_{2d-1}$ of \(K_{d,d}\).  There is a partition of $V(G)$ into $2d$ sets
$V_1, \ldots ,V_{2d}$ of size $2d-1$ compatible with the bipartitions of the graphs $B_i$ in the sense that if two vertices are in distinct sides of 
$B_i$ then they 
lie in distinct $V_i$s, such that there does not exist an IT.  
 \end{theorem}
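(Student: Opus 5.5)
The plan is to give an explicit partition and to verify the absence of an IT through a Hall-type obstruction. I would fix bipartitions $B_i=(L_i,R_i)$ with $|L_i|=|R_i|=d$. An independent set of $G$ meets each $B_i$ on at most one side. Hence an IT amounts to two choices: a side-choice $\sigma\in\{L,R\}^{2d-1}$, and an SDR of the restricted classes $V_j\cap U_\sigma$, where $U_\sigma=\bigcup_i \sigma_i(B_i)$. So the goal is a partition such that, for every $\sigma$, the family $(V_j\cap U_\sigma)_{j\le 2d}$ violates Hall's condition. Note that $|U_\sigma|=d(2d-1)$ while there are $2d$ classes, so the obstruction cannot come from a shortage of vertices. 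It must come from concentration: some classes must have all their surviving vertices in too few places.

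The construction I would try first is a cyclic or inductive one in the spirit of Jin and Yuster.
\begin{enumerate}
\item Start from the base case $d=1$: one edge $ab$, with $V_1=\{a\}$ and $V_2=\{b\}$. There is plainly no IT.
\item For the step $d\to d+1$, enlarge each existing copy of $K_{d,d}$ to $K_{d+1,d+1}$ and add two new copies. Then distribute the new vertices so that every class grows by exactly $2$ and two new classes of size $2d+1$ appear. Each new class should consist of one vertex from each copy, taken on a carefully chosen side.
\item Design the distribution so that the following holds. Whichever side is chosen in the new copies, either some new class has all its vertices on rejected sides, or the new vertices are "used up" representing the new classes. In the latter case the remaining problem reduces to the instance for $d$, which has no IT by induction.
\end{enumerate}

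A naive attempt shows why care is needed. One could put $R_i$ entirely inside $V_i$ for $i\le 2d-1$, let $V_{2d}$ take one vertex of each $L_i$, and spread the rest of the $L$-vertices. This already fails when every copy chooses $L$, because then there is enough room. The two sides must therefore be treated symmetrically: each class should meet both sides of several copies in a balanced pattern.

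Compatibility with the bipartitions is a requirement on the partition itself: two vertices on opposite sides of some $B_i$ must never share a class. I would enforce it throughout, so that it survives the inductive step.

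The main obstacle is the verification step. It must be shown that for all $2^{2d-1}$ side-choices $\sigma$ a Hall violator exists. I would organize this by counting, for each $\sigma$, the set $S$ of copies on which $\sigma$ selects the side intersecting a distinguished class (e.g.\ $V_{2d}$). I would then exhibit the violating set of classes as those indexed by $S\cup\{2d\}$ together with the classes forced by the inductive structure. The aim is to show that these classes have at most $|S|$ admissible representatives in total, after accounting for the fact that each copy supplies vertices from only one side. If this bookkeeping does not close, the fallback is to argue directly by induction: the restriction of the instance to the old copies (after deleting the two new copies and the two new classes) is an instance for $d$, and the new vertices cannot rescue it.
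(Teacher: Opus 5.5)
You never actually give the partition, and the partition is the whole content of the statement. The paper does not prove it; it quotes the explicit construction of Jin, Yuster and Szab\'o--Tardos. Your step ``design the distribution so that \dots'' is the theorem itself.

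Moreover, the one concrete scheme you commit to provably fails. Since the $V_j$ are pairwise disjoint, $(V_j\cap U_\sigma)_j$ has an SDR if and only if every $V_j\cap U_\sigma$ is nonempty. So multi-class Hall violators and ``concentration'' are a red herring: there is no IT exactly when, for every $\sigma$, some class lies entirely on sides rejected by $\sigma$. By compatibility, a class meeting $t$ copies sits on one side of each, so it is missed by $U_\sigma$ for exactly a $2^{-t}$ fraction of all side-choices. Hence $\sum_j 2^{-t_j}\ge 1$ is necessary.

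In your step $d\to d+1$, each new class meets all $2d+1$ copies. The two new classes absorb both new vertices of every old copy, so every old class must grow inside the two new copies, and its $t_j$ rises by at least one. The sum therefore at most halves and then gains $2\cdot 2^{-(2d+1)}$. Starting from $d=1$ (sum $1$), at $d=2$ you get at most $\frac14+\frac14+\frac18+\frac18=\frac34<1$. So some $\sigma$ misses no class and an IT exists, and the sum stays below $1$ at every later step. Your ``used up'' reduction in step 3 breaks for the same reason: old classes can be represented by their vertices in the new copies, and disjoint classes never compete for representatives.

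The missing idea is to organize the classes along a binary decision tree so that every side-choice is caught by some class. Take a full binary tree whose $2d-1$ internal nodes are the copies and whose $2d$ leaves are the classes. At a node $B$, one side of $B$ is split among the classes in one subtree and the other side among those in the other subtree, with each class receiving $2d-1$ vertices from its ancestors. Compatibility is then automatic. For any $\sigma$, walk down from the root, always entering the subtree whose classes lie on the side of the current copy rejected by $\sigma$; the leaf you reach is a class disjoint from $U_\sigma$.

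One distribution that works is as follows. A root copy $B_0$ has its two sides feeding two halves. Each half consists of copies $B_1,\dots,B_{d-1}$ and classes $W_1,\dots,W_d$, with these rules:
\begin{itemize}
\item $W_i$ ($i\le d-1$) takes a whole side $S_i$ of $B_i$;
\item the other side $T_i$ of $B_i$ gives $d-1$ vertices to $W_{i+1}$ and one to $W_d$;
\item the half's side of $B_0$ gives $d-1$ vertices to $W_1$ and one to $W_d$.
\end{itemize}
Every class then has exactly $2d-1$ vertices. Given $\sigma$, go to the half whose side of $B_0$ is rejected. There, the class $W_i$ with $i$ least such that $S_i$ is rejected (or $W_d$ if there is no such $i$) misses $U_\sigma$.
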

 We shall call such a graph, together with the partition, a $d$-{\em JYTS contraption}. The idea of the following construction is to start with a 
 $d$-{\em JYTS contraption} for a certain $d$ and replace each vertex with a $k$-set in such a way that the $k$-sets corresponding to the endpoints 
 of any edge meet while the maximal degree is relatively low. 
 
 We use the standard notation $AG(2,q)$ for the affine plane of order $q$, obtained from the $q+1$-uniform projective plane by removing 
 the vertices of one line.  Let $k$ be a power of $2$. It is known that $A(2,k)$ exists. Fix a line $\ell_0$ of $A(2,k)$ and let
 \[ H:=\{ \ell:\ \ell \text{ is a line of }A(2,k), \  \left|\ell \cap \ell_0 \right|=1 \}. \]
 We claim that $H$ is a $k$-uniform $k$-regular hypergraph with $L(H)= K_{k,k,\ldots,k}$ ($k$ times). Indeed, the $k^2+k$ lines of $A(2,k)$ fall 
 into $k+1$ equivalence classes of $k$ lines, where the equivalence is parallelism, and $H$ was defined by removing one such class.
 Let $d=k^2/2.$ Obviously, $L(H)$ contains  $K_{d,d}$ as a subgraph. The $d$-JYST contraption consists of $2d-1=k^2-1$ copies of $K_{d,d}$. 
 Replace each copy of $K_{d,d}$ with a copy $H^j$ of $H$ where every vertex $v$ of $K_{d,d}$ is replaced by a hyperedge $\Phi_j(v)\in H^j$ 
 such that $\Phi_j(v)\cap \Phi_j(w)\neq \emptyset $ whenever $vw$ is an edge of $K_{d,d}$. Let $H^+$ be the union of the vertex-disjoint 
 hypergraphs $H^j$ and let $H_i\ (i\in[2d])$ be the partition that $H^+$ inherits  from the $d$-JYST contraption. On the one hand, there is no SJR 
 because that would provide an IT in the $d$-JYTS contraption.
On the other hand, $\Delta(H^+)=\Delta(H)=k$ and $|H_i|=2d-1$ for each $i\in [2d]$. This provides the ratio: \[\frac{2d-1}{k}=\frac{k^2-1}{k}=k 
\left(1-\frac{1}{k^2} \right).  \]

\section{Independent transversals in graphs}

Next we turn to type \ref{item: small DeltaG}. The following is a  classical result of Haxell. It was originally proved combinatorially, but it also 
follows  from part \ref{item: gammat} of Theorem \ref{inequalities}:
\begin{theorem}[\cite{haxell1995condition}]\label{thm:pennyfirst}  Let $V_1, \ldots, V_m$ be a partition of the vertex set $V$ of a graph $G$. 
  If  $|V_i| \ge 2\Delta(G)$  for every $i \in [m]$  then there exists an IT.
\end{theorem}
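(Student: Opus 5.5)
We derive Haxell's theorem from Theorem \ref{tophall} applied to $\C=\ci(G)$, using part \ref{item: gammat} of Theorem \ref{inequalities} to bound the connectivity. The plan is to check, for every $J\subseteq[m]$, that $\eta(\ci(G[V_J]))\ge |J|$. Since $\ci(G)[V_J]=\ci(G[V_J])$, once this holds for all $J$, Theorem \ref{tophall} produces an $\ci(G)$-transversal, which is exactly an IT.

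Fix $J\subseteq[m]$ and put $G_J=G[V_J]$. By part \ref{item: gammat}, $\eta(\ci(G_J))\ge \gamma_t(G_J)/2$, so it suffices to show $\gamma_t(G_J)\ge 2|J|$.

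\begin{itemize}
\item First dispose of isolated vertices. If $G_J$ has an isolated vertex $v$, then $\ci(G_J)$ is a cone with apex $v$, hence contractible, and $\eta=\infty$. Equivalently, no totally dominating set exists, so $\gamma_t=\infty$ under the usual convention. Either way the required bound holds.
\item Otherwise let $S$ be a totally dominating set of $G_J$. Every vertex of $V_J$ lies in $\bigcup_{s\in S}N_{G_J}(s)$, and each $s$ has at most $\Delta(G_J)\le\Delta(G)$ neighbours. Hence $|V_J|\le |S|\,\Delta(G)$.
\item Since the $V_i$ form a partition, $|V_J|=\sum_{j\in J}|V_j|\ge 2\Delta(G)|J|$.
\item Combining the two bounds gives $|S|\ge 2|J|$, so $\gamma_t(G_J)\ge 2|J|$ and therefore $\eta(\ci(G_J))\ge |J|$.
\end{itemize}

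All of the substance sits in part \ref{item: gammat} of Theorem \ref{inequalities}, which we take as given. The only places needing care are the degenerate cases. When $\Delta(G)=0$, every vertex is isolated and the independent transversal exists trivially. When $G_J$ has isolated vertices, the cone argument above applies. The disjointness of the $V_i$ is used only in the counting step $|V_J|=\sum_{j\in J}|V_j|$.
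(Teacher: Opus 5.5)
Your proof is correct and follows the same route the paper indicates: you combine Topological Hall with part \ref{item: gammat} of Theorem \ref{inequalities}, and you supply the counting bound $\gamma_t(G[V_J])\ge |V_J|/\Delta(G)\ge 2|J|$ that the paper leaves implicit. Your handling of isolated vertices via the cone argument is also fine.
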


\begin{remark}\label{rem: test}
    This is also true if the sets $V_i$ are not disjoint, once the degrees are counted with multiplicity - an edge $xy$ is counted towards $deg(x)$  the 
number of sets $V_i$ containing $y$. Namely, defining
\[ deg_t(x):=\sum_{xy \in E}deg_\cv(y)      \]
and $\Delta_t(G):=\max_{x \in V} deg_t(x)$, if $|V_i|\ge 2\Delta_t(G)$ for all $i$ then there exists an IT.
\end{remark}

 Theorem \ref{jyst} states that this theorem is sharp, namely 
for every $d$ there exists a graph $G$ with $\Delta(G)=d$ and a partition  of $V(G)$ into parts of size  $2d-1$ with no IT.

In this section, we discuss improvements of Theorem \ref{thm:pennyfirst} for certain classes of graphs. The first result in this direction appeared in 
\cite{aharoni2007independent}:  
\begin{theorem}[\cite{aharoni2002tree}]
If $G$ is chordal (having no induced cycles larger than $3$) and $|V_i|>\Delta(G)$ then there exists an IT.
\end{theorem}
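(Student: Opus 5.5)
We plan to deduce the statement from the Topological Hall Theorem (Theorem \ref{tophall}) applied to $\C=\ci(G)$. Fix $J\subseteq[m]$, put $W=V_J$ and $G_J=G[W]$. Induced subgraphs of chordal graphs are chordal, and the parts $V_j$, $j\in J$, partition $W$. Moreover every vertex of $G_J$ has degree at most $\Delta(G)$, while each $V_j$ has at least $\Delta(G)+1$ vertices. It therefore suffices to prove the purely graph-theoretic inequality
\[
\eta(\ci(G'))\ \ge\ \frac{|V(G')|}{\Delta(G')+1}
\]
for every chordal graph $G'$. Applied to $G'=G_J$ this gives $\eta(\ci(G_J))\ge |W|/(\Delta(G)+1)\ge |J|$, and Theorem \ref{tophall} then yields the IT.

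To get the inequality we use part \ref{item: gammaii} of Theorem \ref{inequalities}, which says $\eta(\ci(G'))\ge\gamma^i_i(G')$. So it is enough to find one independent set $I$ of $G'$ such that every independent set dominating $I$ has size at least $|V(G')|/(\Delta+1)$. Equivalently, every independent set $S$ dominating $I$ satisfies $|S|(\Delta+1)\ge |V(G')|$. The natural choice uses a perfect elimination ordering. Repeatedly take a simplicial vertex $v$, put it in $I$, and delete its closed neighbourhood $N[v]$, which is a clique. Chordality is preserved under these deletions, so the process runs to the end. It gives an independent set $I=\{v_1,\dots,v_r\}$ together with a partition of $V(G')$ into the cliques $C_t=N[v_t]$, each taken in the graph remaining at step $t$.

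The crux is the counting. If $S$ is independent, each $C_t$ is a clique, so $S$ contains at most one vertex of each $C_t$. This alone is the wrong direction for a lower bound on $|S|$, and it is the main obstacle I expect. The approach I would try first is a charging argument. Each $s\in S$ dominates at most $\Delta+1$ vertices, so it suffices to charge every vertex of $V(G')$ to a vertex of $S$ lying in its closed neighbourhood. For this I would strengthen the choice of $I$. Pick $v_t$ to be a simplicial vertex whose neighbourhood is contained in the closed neighbourhood of each of its neighbours. Such a vertex exists in chordal graphs: take the last vertex of a maximum-neighbourhood-type ordering, or a leaf clique of the clique tree. With this choice, any vertex $s$ dominating $v_t$ lies in $C_t$, and $N[s]\supseteq C_t$. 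The sets $C_t$ partition $V(G')$, and each $s\in S$ lies in at most one $C_t$, so
\[
|V(G')|=\sum_t|C_t|\le\sum_{s\in S}|N[s]|\le |S|(\Delta+1).
\]
If securing this nested-neighbourhood simplicial vertex at every step fails, the fallback is the weaker requirement that every vertex dominating $v_t$ dominates all of $C_t$. That holds whenever $C_t$ is a maximal clique containing a simplicial vertex $v_t$, which is automatic for a leaf of the clique tree. The same counting then goes through.
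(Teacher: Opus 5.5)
You correctly reduce the statement to $\eta(\ci(G'))\ge |V(G')|/(\Delta(G')+1)$ for chordal $G'$, via Theorem~\ref{tophall}. By part~\ref{item: gammaii} of Theorem~\ref{inequalities} it then suffices to find an independent $I$ that cannot be dominated by fewer than $|V(G')|/(\Delta+1)$ vertices. The gap is in the charging step. $C_t$ is the closed neighbourhood of $v_t$ in the graph remaining at step $t$, but domination of $v_t$ happens in $G'$. A vertex deleted at an earlier step can dominate $v_t$ without lying in $C_t$ or dominating it, and one such vertex can dominate many of the $v_t$. Your extra condition does not prevent this: every vertex that is simplicial in the current graph satisfies it automatically.

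Here is a concrete failure. Let $c$ be adjacent to $p,x_1,x_2$, and add triangles $x_1y_1z_1$ and $x_2y_2z_2$. This graph is chordal, with $\Delta=3$ and $8$ vertices. Your procedure may pick $p$ (deleting $\{p,c\}$), then $x_1$, then $x_2$. Each is simplicial in the remaining graph, and each $C_t$ is a maximal clique and a leaf of the clique tree, so the hypothesis of your fallback also holds. Yet $c$ dominates $x_1$ but not $y_1$. The independent set $\{c\}$ dominates $I=\{p,x_1,x_2\}$, and $1\cdot(\Delta+1)=4<8$, so this $I$ only certifies $\eta\ge 1$.

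No cleverer choice of the $v_t$ rescues this scheme. The property you need is that every vertex dominating $v_t$ dominates $C_t$, with the $C_t$ partitioning $V(G')$. That property forces every set dominating $I$ to dominate all of $V(G')$. In turn, every $w$ must then have some $v\in I$ with $N[v]\subseteq N[w]$; otherwise choosing one vertex of $N[v]\setminus N[w]$ for each $v\in I$ gives a set dominating $I$ but not $w$. Now take the path with vertices $1,\dots,6$. The only vertex whose closed neighbourhood lies inside $N[3]=\{2,3,4\}$ is $3$ itself, and likewise for $4$. So $I$ would have to contain both $3$ and $4$, which are adjacent.

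What is missing is a genuine min--max argument that controls dominating sets of $I$ which do not dominate the whole graph. The paper gives no proof and only cites \cite{aharoni2002tree}. The route there is that for chordal $G$ one has $\gamma^i(G)=\gamma(G)$, proved by a tree version of K\H{o}nig's theorem applied to the subtree representation. Then $\eta(\ci(G))\ge\gamma^i(G)=\gamma(G)\ge |V(G)|/(\Delta(G)+1)$, where the last inequality holds because each vertex dominates at most $\Delta+1$ vertices, and your reduction finishes the proof.
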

In fact, we do not know a  counterexample to the daring idea that it suffices to exclude  chordless $C_4$'s (i.e., allowing chord-less cycles of length 
>4).

In \cite{aharoni2015cooperative} the following was proved:
\begin{theorem}\label{kdd}
    A graph witnessing the sharpness of Theorem \ref{thm:pennyfirst} with maximal degree $d$ must contain $2d-1$  connected components 
    isomorphic to $K_{d,d}$.  
\end{theorem}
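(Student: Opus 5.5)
The plan is to run the topological Hall argument (Theorem \ref{tophall} with part \ref{item: gammat} of Theorem \ref{inequalities}) and track where equality is forced. Let $G$ have maximum degree $d$, and let $V_1,\dots,V_m$ be parts of size $2d-1$ with no IT. By Theorem \ref{tophall} there is a set $J\subseteq[m]$ with $\eta(\ci(G[V_J]))<|J|$. Choose $J$ minimal and put $G_J:=G[V_J]$, so $|V(G_J)|=(2d-1)|J|$.

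The bound $\eta(\ci(G))\ge\gamma_t(G)/2$ comes from Meshulam's edge recursion,
\[
\eta(\ci(G))\ \ge\ \min\bigl\{\eta(\ci(G-e)),\ \eta(\ci(G-N(x)\cup N(y)))+1\bigr\},\qquad e=xy .
\]
Iterating it produces a sequence of edges $e_1=x_1y_1,\dots,e_t=x_ty_t$ such that the sets $N(x_s)\cup N(y_s)$, taken in the residual graphs, exhaust $V(G_J)$. The recursion then certifies $\eta\ge t$.

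I will not use the statement $\eta\ge\gamma_t/2$ as a black box. Instead I will rerun this recursion on $G_J$ and count.
\begin{itemize}
\item Each step removes at most $|N(x)|+|N(y)|\le 2d$ vertices.
\item Since $\eta(\ci(G_J))\le |J|-1$, the recursion must reach $t\le|J|-1$ steps.
\item Hence the removed sets, of total size at most $2d(|J|-1)$, cover $(2d-1)|J|$ vertices.
\end{itemize}
This gives $|J|\ge 2d$, and with it a slack of only $|J|-2d$ in the count. The first concrete goal is to sharpen this so that the slack is zero. That means showing that essentially every step removes exactly $2d$ vertices, that the removed sets are pairwise disjoint, and that the number of steps is exactly $|J|-1$. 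It suffices to exploit minimality of $J$: deleting one part keeps $\eta\ge |J|-1$, which should force $|J|=2d$ and $t=2d-1$.

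With $t=2d-1$ and $2d(2d-1)=(2d-1)\cdot 2d$, every inequality above is an equality. For each step this gives:
\begin{itemize}
\item $N(x_s)\cap N(y_s)=\emptyset$;
\item $|N(x_s)|=|N(y_s)|=d$, counted in the current residual graph;
\item no vertex of $N(x_s)\cup N(y_s)$ has a neighbour outside it among the vertices still present.
\end{itemize}
The last point holds because otherwise one could choose the recursion differently, removing fewer vertices or using a different edge, and obtain a better bound.

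The key local claim is then that each removed block $B_s=N(x_s)\cup N(y_s)$ induces a $K_{d,d}$ component. I would try to prove it by applying the recursion with every edge $x'y'$ inside $B_s$ in place of $x_sy_s$, since equality must persist for any choice. That forces every $x'\in N(y_s)$ to be adjacent to all of $N(x_s)$, and symmetrically. Degree $d$ then bars further edges, so $B_s$ is a component isomorphic to $K_{d,d}$. This gives $2d-1$ such components.

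I expect the main obstacle to be this equality analysis. Making ``every choice of edge must be tight'' rigorous needs care, because the recursion takes a minimum and only one branch needs to be tight. I would handle this by choosing the edge at each step adversarially, namely one maximising $\eta$ of the contracted graph, so that tightness propagates to all edges.
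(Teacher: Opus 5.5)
The paper gives no proof of this statement; it quotes it from \cite{aharoni2015cooperative}. So I have judged your plan on its own merits. It has a genuine gap at the step where you say minimality of $J$ forces $|J|=2d$.

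\textbf{The main gap.} After Theorem~\ref{tophall} produces $J$, you never use the absence of an IT again. You only use four facts: $\Delta(G)\le d$, $|V_i|=2d-1$, $\eta(\ci(G_J))\le |J|-1$, and $\eta(\ci(G_{J'}))\ge |J'|$ for every $J'\subsetneq J$. These facts force neither $|J|=2d$ nor $2d-1$ copies of $K_{d,d}$.

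Take $d=2$ and $G=C_4\,\dot\cup\,C_4\,\dot\cup\,C_7$, split arbitrarily into five triples. Then $\ci(G)\simeq S^0*S^0*S^1\simeq S^3$, so $\eta(\ci(G))=4<5$. Now check the proper subfamilies, using these facts:
\begin{itemize}
\item every nonempty remnant of a $C_4$ has $\eta\ge 1$;
\item $\eta(\ci(C_7))=2$;
\item a disjoint union of paths on $r$ vertices has $\eta\ge\lceil r/3\rceil$;
\item $\eta$ is superadditive under joins.
\end{itemize}
A short case check gives $\eta(\ci(G_{J'}))\ge |J'|$ for every proper $J'$. So $J=[5]$ is the unique, hence minimal, violating set. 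It has $|J|=2d+1$, and $G_J$ contains only two copies of $K_{2,2}$.

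Now add a $2$-JYST contraption on four further triples. The resulting graph has no IT, yet this $J$ is still an inclusion-minimal violating set. So the problem is not an artefact of $G$ having an IT. Note also that $\gamma_t(G_J)=8=2|J|-2$, so a total-domination certificate is equally blind here.

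The inference ``$\eta(\ci(G_{J\setminus j}))\ge |J|-1$ for all $j$, hence $|J|=2d$'' is therefore false. The slack $|J|-2d$ cannot be removed this way, and your equality analysis never gets started. Failure of the topological Hall condition at a single $J$, even a minimal one, carries strictly less information than ``no IT''. A correct proof must use non-existence of an IT beyond extracting one violating $J$. For instance, it would need a separate argument that some violating $J$ of size exactly $2d$ exists.

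\textbf{Secondary problems, even granting $|J|=2d$.} Meshulam's inequality bounds $\eta(\ci(G))$ from below by a minimum over two branches. A chain of contraction steps therefore certifies nothing by itself. What your count really follows is a path of failing branches, and such a path in general interleaves edge deletions, so your ``residual graphs'' have lost edges.

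Zero slack then only tells you that $N(x_s)$ and $N(y_s)$ are disjoint $d$-sets in the current graph. It says nothing about the edges between them, nor about neighbours outside $B_s$. The justification ``otherwise a different recursion would give a better bound'' is not a contradiction unless that bound reaches $|J|$. Finally, choosing $e$ to maximise $\eta$ of the contracted graph does not control which branch is tight, so ``tightness propagates to all edges'' is unsupported.
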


This indicates that in graphs not containing induced copies of complete bipartite graphs of small orders Theorem \ref{thm:pennyfirst} may be 
strengthened. A particular case is line graphs. The line graph of a $k$-uniform hypergraph does not contain an induced $K_{1,k+1}$. This brings us 
to the next subsection. 
\subsection{Matching complexes (independence complexes of line graphs)}\hfill
\\

In \cite{aharoni2016eigenvalues} the following was shown:
\begin{theorem}
If a graph $G$ does not contain an induced $K_{1,k}$ and  $|V_i|\ge \Delta(G)+k$ then there exists an IT. 
    \end{theorem}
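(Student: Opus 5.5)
The plan is to verify the hypothesis of Theorem~\ref{tophall} for $\C=\ci(G)$. Fix $J\subseteq[m]$ and put $G_J=G[V_J]$. We need $\eta(\ci(G_J))\ge |J|$.

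Two facts about $G_J$ drive the argument. It has at least $|J|(\Delta(G)+k)$ vertices, and it is still $K_{1,k}$-free with maximal degree at most $\Delta(G)$. So it suffices to prove the following single inequality for every $K_{1,k}$-free graph $F$ with maximal degree at most $\Delta:=\Delta(G)$:
\[
\eta(\ci(F))\ \ge\ \frac{|V(F)|}{\Delta+k}.
\]
Applying it to $F=G_J$ gives exactly $\eta(\ci(G_J))\ge|J|$, and Theorem~\ref{tophall} then yields the IT.

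To prove the inequality I would use part \ref{item: GammaG} of Theorem~\ref{inequalities} in the dual form (\ref{Gamma}). It is enough to find a p.s.d. matrix $B\ge Adj(F)$ whose row sums are all at most $\Delta+k$. Then the constant vector $\vec\alpha=\frac{1}{\Delta+k}\vec 1$ is feasible, and we get $\Gamma(F)\ge |V(F)|/(\Delta+k)$.

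For comparison, the trivial choice $B=Adj(F)+\Delta I$ is p.s.d. because the least eigenvalue of $Adj(F)$ is at least $-\Delta$. It gives row sums $2\Delta$, which only recovers Theorem~\ref{thm:pennyfirst}. The gain must therefore come from claw-freeness, which says that every neighbourhood $N(v)$ has independence number at most $k-1$.

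The construction I would try first is the Gram matrix of a clique-based representation. Cover $E(F)$ by cliques $Q_1,\dots,Q_r$, and let $P(v)$ be the vector in $\mathbb R^r$ with entry $w_j$ (a suitable weight) in coordinate $j$ when $v\in Q_j$. Then:
\begin{itemize}
\item adjacent vertices share a clique, so one can arrange $P(u)\cdot P(v)\ge 1$ for them;
\item all inner products are nonnegative, so $B=PP^T$ is a valid representation.
\end{itemize}
The $u$-th row sum of $B$ equals $\sum_{j:\,u\in Q_j} w_j^2|Q_j|$.

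The natural choice is to cover $N[u]$ by at most $k-1$ cliques through $u$. This is available in the line-graph case that motivates this subsection: there $N[u]$ splits into at most $k$ cliques, one per vertex of the hyperedge. In that case the cliques through $u$ have total size about $\deg(u)+(k-1)$, giving a row sum of roughly $\Delta+k$.

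The main obstacle is that a general $K_{1,k}$-free graph need not have its neighbourhoods covered by few cliques. Bounded independence number of $N(v)$ does not bound its clique cover number. If the clique representation fails, I would switch to a combinatorial route through part \ref{item: gammaii} of Theorem~\ref{inequalities}, that is, a lower bound on $\gamma_i^i(F)$. The idea is to choose a large independent set $I$, in weighted Caro--Wei fashion as in Theorem~\ref{carowei}. By claw-freeness, every vertex $s$ of a dominating set dominates at most $k-1$ vertices of $I$ outside itself. A counting or discharging argument should then give a dominating set size of at least $|V(F)|/(\Delta+k)$. The delicate point in this route is weighting $I$ so that the count yields $\Delta+k$ rather than $k(\Delta+1)$. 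Balancing these two estimates is where I expect the real work to lie.
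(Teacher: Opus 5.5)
\textbf{Verdict: the proposal has a genuine gap, and the gap cannot be closed, because both your key lemma and the statement itself are false for $k\ge 3$.}

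Your skeleton is the route the paper indicates: apply Theorem~\ref{tophall} to each $G[V_J]$, reduce to $\eta(\ci(F))\ge |V(F)|/(\Delta+k)$ for $K_{1,k}$-free $F$, and prove that bound via $\Gamma$. The gap is that you never prove this inequality. The clique representation is abandoned, and the $\gamma_i^i$ route stops at ``a counting or discharging argument should then give''. In fact the inequality fails already for $k=3$.

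Let $F$ consist of four triangles $X_1,X_2,Y_1,Y_2$, with $X_1\cup X_2$ completely joined to $Y_1\cup Y_2$ and no other edges. Equivalently, $F$ is the line graph of a $4$-cycle with every edge tripled.
\begin{itemize}
\item $F$ is $8$-regular.
\item $F$ is claw-free: for $v\in X_1$, each vertex of $X_1\setminus\{v\}$ is adjacent to all of $N(v)$, and $\alpha(F[Y_1\cup Y_2])=2$.
\item Its independent sets are the singletons and the pairs with one vertex in $X_1$ and one in $X_2$, or one in $Y_1$ and one in $Y_2$. So $\ci(F)$ is two disjoint copies of $K_{3,3}$, and $\eta(\ci(F))=1<12/11=|V(F)|/(\Delta+3)$.
\end{itemize}
Consequently no p.s.d.\ $B\ge Adj(F)$ has all row sums at most $\Delta+k$. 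Otherwise \eqref{Gamma} and Theorem~\ref{inequalities}\ref{item: GammaG} would force $\eta(\ci(F))\ge 12/11$. Your natural clique cover $\{X_a\cup Y_b\}$, for instance, has row sums $12$, because the two cliques through a vertex share its whole triangle. Claw-freeness does nothing to prevent such overlaps.

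Worse, the statement itself is false. Every independent set of $F$ lies inside $X_1\cup X_2$ or inside $Y_1\cup Y_2$, and $F$ contains the $K_{6,6}$ between these two sides. Start from the $6$-JYTS contraption of Theorem~\ref{jyst}: $11$ copies of $K_{6,6}$ and $12$ parts of size $11$, with no IT. Turn each copy into $F$ by splitting each side into two triangles. Adding edges cannot create an IT. The resulting graph is claw-free, has $\Delta=8$ and $|V_i|=11=\Delta+3$, and has no IT. Splitting the sides of $K_{d,d}$ into $k-1$ cliques, with $d$ a large enough multiple of $k-1$, gives counterexamples for every $k\ge 3$.

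What the paper's cited ingredients actually prove is a line-graph version. Theorem~\ref{tophall} with Theorem~\ref{lineofgraphs} covers line graphs of simple graphs. Theorem~\ref{lineoflinear} extends this to line graphs of linear $k$-uniform hypergraphs. There the canonical representation $P(e)=\chi_e$ gives $PP^T=Adj(L)+kI$, whose row sums are $\deg_L(e)+k$. This is exactly your clique representation in the case where the cliques through each vertex meet only in that vertex. So your first idea is the right proof of the line-graph statement, but no argument can prove the $K_{1,k}$-free version as written.
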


   This follows from  Theorem \ref{tophall} and the following:

    \begin{theorem}[\cite{aharoni2016eigenvalues}]\label{lineofgraphs}
     If $L$ is a line graph of a simple graph then $\eta(\ci(L))\ge \frac{|V(L)|}{\Delta(L)+2}$.
\end{theorem}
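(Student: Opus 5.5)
We prove Theorem \ref{lineofgraphs} by part \ref{item: GammaG} of Theorem \ref{inequalities}: we exhibit a vector representation $P$ of $L$ and a nonnegative $\vec{\alpha}$ with $\vec{\alpha}PP^T\le \vec{1}$ and $\vec{\alpha}\cdot\vec{1}=\frac{|V(L)|}{\Delta(L)+2}$. Write $L=L(G)$ for a simple graph $G$. The vertices of $L$ are the edges $e=uv$ of $G$, and $\deg_L(e)=\deg_G(u)+\deg_G(v)-2$. We take the canonical representation $P(e)=\chi_e\in\mathbb{R}^{V(G)}$, the line graph of the $2$-uniform hypergraph $G$. This is a valid representation: $P(e)\cdot P(f)=|e\cap f|$, which is $1$ if $e,f$ meet and $0$ otherwise, and $P(e)\cdot P(e)=2$. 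The matrix $PP^T$ is therefore $Adj(L)+2I$, so it is p.s.d. and dominates $Adj(L)$, as required in (\ref{Gamma}).

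Next we choose $\vec{\alpha}$ to be constant, $\vec{\alpha}(e)=\frac{1}{\Delta(L)+2}$ for every $e$. Then
\[(\vec{\alpha}PP^T)(e)=\frac{1}{\Delta(L)+2}\bigl(2+\deg_L(e)\bigr)\le 1,\]
so $\vec{\alpha}$ is feasible in the maximization form (\ref{Gamma}), and $\vec{\alpha}\cdot\vec{1}=\frac{|V(L)|}{\Delta(L)+2}$. Hence $\Gamma(L)\ge \frac{|V(L)|}{\Delta(L)+2}$, and part \ref{item: GammaG} gives $\eta(\ci(L))\ge\Gamma(L)\ge\frac{|V(L)|}{\Delta(L)+2}$.

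There is no real obstacle here. The only point to check is that the diagonal entries of $PP^T$ equal $2$, which is what produces the $+2$ in the denominator, and that the inequality in (\ref{Gamma}) runs in the direction we use. The matching complex is $\cm(G)=\ci(L(G))$, consistent with the argument in the proof of Theorem \ref{coarselocal}; the present argument is the $k=2$ specialization of that one, with the degree bound taken from $\Delta(L)$ rather than from vertex degrees in $G$. If one wanted a bound in terms of the degrees in $G$ instead, one could instead take $\vec{\alpha}(uv)$ inversely proportional to $\deg_G(u)+\deg_G(v)$. For the stated bound, the uniform choice suffices.
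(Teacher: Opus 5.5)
Your proof is correct and is essentially the paper's own argument: the paper obtains Theorem \ref{lineofgraphs} as the $k=2$ case of its $\Gamma$-based proof of Theorem \ref{lineoflinear}, using the same canonical representation with diagonal entries $|e\cap e|=k$. The only difference is that the paper sums the constraints of the minimization form $\vec{\alpha}PP^T\ge\vec{1}$, whereas you exhibit the uniform feasible point $\frac{1}{\Delta(L)+2}\vec{1}$ of the maximization form (\ref{Gamma}); by weak LP duality these are the same computation, and your argument is closer to the proof of Theorem \ref{lineoflinear} than to that of Theorem \ref{coarselocal}.
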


A generalization to multigraphs was obtained by the second author using topology.
For a multigraph $G$ let 
$\Delta_2(G)=\max_{e=xy \in E(G)}(deg(x)+deg(y))$.

\begin{theorem}[\cite{Eli2026private}]
For every finite multigraph $G$,    $\eta(\mathcal{M}(G))\ge \frac{|E(G)|}{\Delta_2(G)}$.
\end{theorem}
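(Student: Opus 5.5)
The plan is to apply part \ref{item: GammaG} of Theorem \ref{inequalities} to $\ci(L(G))=\mathcal{M}(G)$ with the canonical representation of $L(G)$, in the same way as in the proofs of Theorems \ref{gammailine extra} and \ref{coarselocal}. Concretely, I regard each edge $e=xy\in E(G)$ (parallel edges counted separately) as its characteristic vector $P(e)=\chi_e\in\mathbb{R}^{V(G)}$. I will assume $G$ is loopless, so that every row of $P$ has exactly two $1$s.

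First I check that $P$ is a vector representation of $L(G)$. For distinct edges $e,f$ we have $P(e)\cdot P(f)=|e\cap f|$, which is $\ge 1$ whenever $e$ and $f$ meet, including parallel edges, where the value is $2$. Otherwise the product is $0$. So $B=PP^T$ is positive semidefinite and satisfies $B\ge Adj(L(G))$ entrywise; the diagonal entries are $2\ge 0$.

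By the dual form \eqref{Gamma}, it then suffices to exhibit $\vec{\alpha}\ge\vec{0}$ on $E(G)$ with $\vec{\alpha}PP^T\le\vec 1$ and $\vec{\alpha}\cdot\vec 1=|E(G)|/\Delta_2(G)$. I take the uniform vector $\vec{\alpha}(e)=1/\Delta_2(G)$ for every edge. Then $(\vec{\alpha}P)(v)=deg(v)/\Delta_2(G)$ for every vertex $v$. For an edge $f=xy$ this gives
\[
(\vec{\alpha}PP^T)(f)=(\vec{\alpha}P)(x)+(\vec{\alpha}P)(y)=\frac{deg(x)+deg(y)}{\Delta_2(G)}\le 1
\]
by the definition of $\Delta_2$. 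Hence $\Gamma(L(G))\ge \vec{\alpha}\cdot\vec 1=|E(G)|/\Delta_2(G)$, and part \ref{item: GammaG} yields $\eta(\mathcal{M}(G))\ge |E(G)|/\Delta_2(G)$.

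The only delicate point is the multigraph bookkeeping. Parallel edges must be counted in $deg$, which is consistent with the canonical representation, where they contribute $2$ to $PP^T$. The representation conditions must also hold for them, and they do. If loops were allowed, I would treat a loop at $x$ as the vector $\chi_{\{x\}}$ and count it once in $deg(x)$. The identity $(\vec{\alpha}PP^T)(f)\le (deg(x)+deg(y))/\Delta_2$ then still holds, with the appropriate reading of $\Delta_2$ for loops. Beyond this, I expect no real obstacle, since the argument is a direct instance of the vector-representation bound.
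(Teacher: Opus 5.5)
Your proof is correct. Parallel edges are adjacent in $L(G)$, have inner product $2$, and are counted in $deg$, and you handle this properly.

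The paper itself gives no proof of this statement. It quotes it from a private communication of the second author and says only that it was obtained ``using topology''. Your argument shows it is a direct corollary of part~\ref{item: GammaG} of Theorem~\ref{inequalities} with the canonical representation of $L(G)$. This is exactly the computation the paper uses in its $\Gamma$-based proof of Theorem~\ref{lineoflinear}.

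The only difference is cosmetic. The paper works with the primal form: it sums the constraints $\vec{\alpha}PP^T\ge\vec 1$ and bounds each column sum $\sum_f|f\cap e|=\sum_{v\in e}deg(v)$. You instead exhibit the uniform feasible vector $\vec 1/\Delta_2(G)$ for the dual form \eqref{Gamma}. By weak duality these are the same.

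What your route buys is a short, self-contained derivation that generalizes verbatim. For any finite multihypergraph $H$ it gives $\eta(\mathcal{M}(H))\ge |H|/\max_{e\in H}\sum_{v\in e}deg_H(v)$. This contains the present statement (the $2$-uniform case) and Theorem~\ref{lineoflinear}, since for linear $k$-uniform $H$ one has $\sum_{v\in e}deg_H(v)=k+deg_{L(H)}(e)$. It also contains Theorem~\ref{lineofgraphs}, since for simple graphs $\Delta_2(G)=\Delta(L(G))+2$.

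What it does not address is the conjectured strengthening to $\gamma^i_i(L(G))$. That is a domination statement rather than a bound on $\eta$, and the $\Gamma$ bound says nothing about it.
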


A possible strengthening:
\begin{conjecture}
   In any multigraph $G$ apart from $C_5$, $\gamma^i_i(L(G)) \ge 
   \frac{|E(G)|}{\Delta_2(G)}$.
\end{conjecture}

A slight adaptation of the proof of the last result, as it appears in \cite{aharoni2016eigenvalues}, yields a more general result. Recall that  a 
hypergraph is {\em linear} if no two edges in it share more than one vertex.   Theorem \ref{lineofgraphs} is a special case of: 
\begin{theorem}\label{lineoflinear}
     If $L$ is a line graph of a $k$-uniform linear hypergraph  $H$ then $\eta(\ci(L))\ge \frac{|V(L)|}{\Delta(L)+k}$.
\end{theorem}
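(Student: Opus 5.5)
We derive the bound from part \ref{item: GammaG} of Theorem \ref{inequalities}, using the canonical representation of $L=L(H)$ and a uniform weight vector, in the same way as in the proof of Theorem \ref{coarselocal}. The linearity of $H$ is what turns the Gram matrix of this representation into something we can compute exactly.

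\textbf{Step 1: the Gram matrix.} Let $P$ be the $H\times V(H)$ matrix whose row indexed by $e\in H$ is $P(e)=\chi_e$. This is a vector representation of $L$: if two hyperedges meet, $P(e)\cdot P(f)=|e\cap f|\ge 1$, and otherwise the product is $0$. The entries of $B=PP^T$ are as follows.
\begin{itemize}
\item The diagonal entries are $|e|=k$, since $H$ is $k$-uniform.
\item An off-diagonal entry is $|e\cap f|$. Since $H$ is linear, this equals $1$ exactly when $ef\in E(L)$ and $0$ otherwise.
\end{itemize}
Hence $PP^T=kI+Adj(L)$. Linearity also rules out repeated edges when $k\ge 2$, so the vertices of $L$ are genuinely distinct sets. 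In the degenerate case $k=1$, every vertex of $L$ lies in a clique of edges sharing a vertex, and the same computation still goes through with multiplicities.

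\textbf{Step 2: the weight vector.} Put $\vec{\alpha}(e)=\frac{1}{\Delta(L)+k}$ for every $e\in V(L)$. For each $e$ we get
\[(\vec{\alpha}PP^T)(e)=\frac{k+deg_L(e)}{\Delta(L)+k}\le 1 .\]
So $\vec{\alpha}$ is feasible for the maximization in \eqref{Gamma}, with $B=PP^T$. This $B$ is positive semidefinite and satisfies $B\ge Adj(L)$. It follows that $\Gamma(L)\ge \vec{\alpha}\cdot\vec{1}=\frac{|V(L)|}{\Delta(L)+k}$.

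\textbf{Step 3: conclusion.} Theorem \ref{inequalities}\ref{item: GammaG} gives $\eta(\ci(L))\ge\Gamma(L)$, which completes the argument.

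The only point needing care is Step 1. Without linearity, the off-diagonal entries $|e\cap f|$ could be as large as $k-1$, and the row sums of $\vec{\alpha}PP^T$ would no longer be controlled by $deg_L(e)$. Linearity is exactly what makes $PP^T=kI+Adj(L)$, and the rest is a one-line computation.
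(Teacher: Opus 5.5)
Your proof is correct and essentially matches the paper's. Both use the canonical representation $P(e)=\chi_e$ and linearity to get $\sum_{f}|e\cap f| = k+deg_L(e)\le k+\Delta(L)$. The only difference is that you exhibit the uniform vector $\frac{1}{\Delta(L)+k}\vec{1}$ as a feasible point of the maximization form \eqref{Gamma}, whereas the paper uses the minimization form and sums the constraints $\vec{\alpha}PP^T\ge\vec{1}$, which is precisely using your uniform vector as the LP-duality certificate.
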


 This follows from a careful inspection of  the proof of Theorem \ref{lineofgraphs} in \cite{aharoni2016eigenvalues}. That proof used part 
 \ref{item: frac} of Theorem \ref{inequalities} (so, it is algebraic).  Here is a combinatorial, $\Gamma$-based proof.
\begin{proof}
 
 Let $P$ be the matrix of the natural vector representation of $L$, i.e., the $e$-th row is $P(e)=\chi_e$ for every $e \in H=V(L)$. By part \ref{item: 
 GammaG} of Theorem \ref{inequalities}, in the version viewing $\Gamma$ as a minimum linear program, it suffices to show that if a non-negative 
 row vector $\vec{\alpha}$ on $V(L)=H$ satisfies

\begin{equation}\label{alpha}
    \vec{\alpha} PP^T \ge \vec{1}_H.
    \end{equation}
then $\vec{\alpha} \cdot \vec{1}_H \geq \frac{|V(L)|}{\Delta(L)+k}$. 
     Summing the inequalities of \eqref{alpha},  on the right hand side we have $|V(L)|$. On the left hand side 
      each term $\alpha(e)$ is multiplied  by $\sum_{f \in H}|f \cap e|$. Since $|e \cap e|=k$ and for each of the at most $\Delta(L)$ neighbours of $e$ 
      in $L$ we have $|f\cap e|=1$, this yields the desired conclusion. 
\end{proof}

\begin{conjecture}
    If $G$ has no minor isomorphic to $K_{3,3}$ (in particular if $G$ is planar) and $|V_i| > \Delta(G)+1$ then there is an IT.
\end{conjecture}

 \section{Type (IV): sparsity measured in the line graph}
Theorem \ref{thm:pennyfirst}  yields the following global result:

\begin{theorem}\label{refined_global_result}
Let $\mathcal{H}=(H_1,\dots, H_m)$, where each $H_i$ is a $k$-uniform hypergraph, and let $H=\dot{\bigcup}_{i \in [m]}H_i$. If   $|H_i| \ge 
2\Delta(L(H))$  for every $i \in [m]$  then there exists an SJR.
\end{theorem}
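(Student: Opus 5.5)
The plan is to reduce the statement directly to Haxell's theorem (Theorem \ref{thm:pennyfirst}) applied to the line graph. Let $G=L(H)$. Its vertex set is the multiset of hyperedges of $H=\dot{\bigcup}_{i\in[m]}H_i$, with repeated edges counted separately. Each copy of a hyperedge is tagged by the index $i$ of the $H_i$ it comes from. Then the sets $V_i:=H_i$, viewed as sets of vertices of $G$, form a partition of $V(G)$. This is exactly where the multiset (disjoint union) convention is used: even if the same $k$-set lies in several $H_i$, its copies are distinct vertices of $G$, so the $V_i$ are pairwise disjoint.

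Next I check the hypotheses of Theorem \ref{thm:pennyfirst}. We have $|V_i|=|H_i|\ge 2\Delta(L(H))=2\Delta(G)$ for every $i$. So there is an independent transversal $f$: $f(i)\in V_i$ for every $i$, and $\{f(i)\}_{i\in[m]}$ is independent in $G$. Independence in $L(H)$ means that for $i\ne j$ the hyperedges $f(i)$ and $f(j)$ are not adjacent in $L(H)$, that is, $f(i)\cap f(j)=\emptyset$. Here I use that two copies of the same hyperedge are adjacent in the line graph, since they meet because $k\ge1$. Thus $f$ is an SJR of $\mathcal H$, i.e.\ a $\cm(H)$-transversal, recalling $\cm(H)=\ci(L(H))$.

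The only point needing care is the multiplicity bookkeeping. Repeated hyperedges, within one $H_i$ or across several, must become distinct but mutually adjacent vertices of $L(H)$. Then $\Delta(L(H))$ is the degree counted in this multigraph sense, and the partition $\{H_i\}$ of $V(L(H))$ is genuinely a partition. Alternatively, if one prefers not to form the disjoint union, Remark \ref{rem: test} handles overlapping $V_i$ with degrees counted with multiplicity, and this gives the same bound. Either way, no topological input beyond Theorem \ref{thm:pennyfirst} (itself a consequence of Theorem \ref{inequalities}\ref{item: gammat} and Theorem \ref{tophall}) is needed.
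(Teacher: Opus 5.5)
Your proof is correct and matches the paper, which derives this theorem directly from Theorem \ref{thm:pennyfirst} applied to $L(H)$ with the partition of its vertex multiset into the $H_i$; treating repeated hyperedges as distinct but mutually adjacent vertices is the right way to make that precise. One caveat on your closing aside: Remark \ref{rem: test} treats a vertex lying in several $V_i$ as a single vertex whose copies are not adjacent, so the IT it produces may assign the same hyperedge to two indices, which is not an SJR. Drop that alternative and keep the disjoint-union route.
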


As expected, the local case needs a stronger condition.

 \begin{theorem}\label{localrefined}
         Let $\mathcal{H}=(H_1,\dots, H_m)$, where each $H_i$ is a $k$-uniform hypergraph.  Let $H=\dot{\bigcup}_{i \in [m]}H_i$ and let 
         $L=L(H)$. If  $|H_i|\ge k (deg_L(e)+1)$ whenever  $e\in H_i$ then there exists an SJR.
\end{theorem}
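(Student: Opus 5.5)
We plan to follow the scheme of the proof of Theorem \ref{coarselocal}: apply Theorem \ref{tophall} to the complex $\cm(H)=\ci(L)$. For each $J\subseteq[m]$ we show $\eta(\cm(\ch_J))\ge |J|$, where $\ch_J=\dot{\bigcup}_{j\in J}H_j$. To get this lower bound we use part \ref{item: GammaG} of Theorem \ref{inequalities}, in its maximum form \eqref{Gamma}, with the canonical representation $P(e)=\chi_e$ of $L(\ch_J)$. It then suffices to exhibit a non-negative row vector $\vec{\alpha}$ on $\ch_J$ with $\vec{\alpha}PP^T\le \vec{1}$ and $\vec{\alpha}\cdot\vec{1}\ge |J|$.

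The natural choice, mirroring the local weights before, is
\[\alpha(e)=\frac{1}{k\,(deg_L(e)+1)}\quad\text{for }e\in\ch_J.\]
Summing over $e\in H_i$ and using the hypothesis $|H_i|\ge k(deg_L(e)+1)$ for every $e\in H_i$ gives $\sum_{e\in H_i}\alpha(e)\ge 1$. Hence $\vec{\alpha}\cdot\vec 1\ge|J|$. (If an edge occurs in several $H_i$, its copies are distinct vertices of $L$, and the sum is taken separately over each copy.)

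The main step is the feasibility check: for each $f\in\ch_J$ we need
\[(\vec{\alpha}PP^T)(f)=\sum_{e\in\ch_J}\alpha(e)\,|e\cap f|\le 1 .\]
Only $e=f$ and the neighbours of $f$ in $L(\ch_J)$ contribute, and each contributes at most $k\,\alpha(e)=1/(deg_L(e)+1)$. So the left side is at most $\sum_{e\in N[f]}\frac{1}{deg_L(e)+1}$, the sum over the closed neighbourhood of $f$. This is not bounded by $1$ in general: a low-degree neighbour $e$ of a high-degree $f$ makes a large contribution. This is the obstacle, and it is where locality must be handled.

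To get around it, I would change the representation rather than the weights. Theorem \ref{inequalities}\ref{item: GammaG} allows any p.s.d.\ $B\ge Adj(L)$, so we may rescale $P(e)$ to $c_e\chi_e$ with $c_ec_f\ge1$ on edges. Alternatively, and perhaps more simply, I would exploit the $\min$ formulation: show that every $\vec{\beta}\ge0$ with $\vec{\beta}PP^T\ge\vec 1$ has $\vec\beta\cdot\vec1\ge|J|$. To do so, multiply the inequality at $f$ by $w(f)=\frac{1}{k(deg_L(f)+1)}$ and sum over $f$. The coefficient of $\beta(e)$ becomes $\sum_f w(f)|e\cap f|$. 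If this coefficient is at most $1$ we get $\vec\beta\cdot\vec1\ge\sum_f w(f)\ge|J|$, but bounding it again requires controlling neighbours' degrees. A Caro--Wei style fallback (Theorem \ref{carowei}) suggests the quantity $\sum 1/(deg+1)$ is the right one. So I expect the real proof to combine this sum with a symmetrized weight, such as $\alpha(e)$ proportional to $\min$ over neighbours, or with Theorem \ref{carowei} applied inside each $L(\ch_J)$. Making this step rigorous is the part I am least sure of.
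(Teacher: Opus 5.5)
Your proposal stops short of a proof, and the cause is a lossy estimate, not a real obstacle. The weight vector you start with, $\alpha(e)=\frac{1}{k(deg_L(e)+1)}$, is in fact feasible. You discard it only because you bound each term $\alpha(e)|e\cap f|$ by $k\alpha(e)$, and the resulting closed-neighbourhood sum $\sum_{e\in N[f]}\frac{1}{deg_L(e)+1}$ can indeed be close to $k$. The alternatives in your last paragraph are never carried out, so as written no bound $\eta(\cm(\ch_J))\ge|J|$ is established.

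The fix is to do the accounting vertex by vertex, i.e.\ to check $\vec{\alpha}P\le\frac1k\vec{1}_V$ coordinatewise, exactly as in the proof of Theorem \ref{coarselocal}:
\[(\vec{\alpha}PP^T)(f)=\sum_{v\in f}\ \sum_{e\in\ch_J,\ e\ni v}\alpha(e).\]
The edges of $\ch_J$ through a fixed vertex $v$ pairwise meet, so each of them has $deg_L(e)+1\ge deg_{\ch_J}(v)$. Hence the inner sum is at most $deg_{\ch_J}(v)\cdot\frac{1}{k\,deg_{\ch_J}(v)}=\frac1k$, and summing over the $k$ vertices of $f$ gives $(\vec{\alpha}PP^T)(f)\le 1$. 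Neighbours of $f$ may have much smaller degree than $f$. But those meeting $f$ at a given vertex lie in a clique of $L$ together with $f$, so the more of them there are, the smaller each of their weights is. With feasibility established, your computation $\vec{\alpha}\cdot\vec{1}\ge|J|$, part \ref{item: GammaG} of Theorem \ref{inequalities}, and Theorem \ref{tophall} complete the proof.

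Once repaired, your route differs from the paper's. The paper sums $\frac{1}{deg_L(e)+1}\ge\frac{k}{|H_j|}$ over $e\in\ch_J$ and invokes Caro--Wei (Theorem \ref{carowei}) in $L(\ch_J)$ to get an actual matching, $\nu(\ch_J)=\alpha(L(\ch_J))\ge k|J|$. After that it only needs the integral bound $\eta(\cm(\ch_J))\ge\nu(\ch_J)/k$ of Theorem \ref{gammailine}. Your repaired argument shows instead that $e\mapsto\frac{1}{deg_L(e)+1}$ is already a fractional matching of $\ch_J$, so $\nu^*(\ch_J)\ge k|J|$ and Theorem \ref{gammailine extra} applies. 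It dispenses with Caro--Wei at the price of the vector-representation bound. It also makes visible that the key inequality $deg_L(e)+1\ge deg_H(v)$ for $v\in e$ turns the hypothesis into $|H_i|\ge k\Delta_H(\bigcup H_i)$. So the statement is in fact a direct corollary of Theorem \ref{coarselocal}.
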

Note that $deg_L(e)+1$ may be asymptotically as large as $k\Delta(H)$, so there is a potential $k$-factor between the coefficients in the 
inequalities in Theorems \ref{coarselocal} and \ref{localrefined}. 

\begin{proof} 
By Theorem \ref{tophall}, it suffices to show that 
$\eta(\cm(\mathcal H_J))\ge |J|$ for every  $J \subseteq [m]$, where $\mathcal H_J$ denotes $\dot{\bigcup}_{j\in J}H_j$ . Write the inequalities in 
the condition of the theorem as
\[ \frac{1}{deg_L(e)+1}\ge \frac{k}{|H_j|}, \]
and sum over all edges $e\in \mathcal H_J$ (with multiplicity). On the right-hand side we get $k|J|$, and by Theorem \ref{carowei} and 
$deg_{L(\mathcal H_J)}(e) \leq deg_L(e)$, the sum on the 
left-hand 
side is at most $\alpha(L(\mathcal H_J))=\nu(\mathcal H_J)$. So, 
\[\nu(\mathcal H_J) \ge k|J|.\]

Applying Theorem \ref{gammailine} yields $\eta(\cm(\mathcal H_J))\ge |J|$. 
 \end{proof}
        How sharp is this result? For $m=2$ the factor $k$ is not needed - it suffices to assume that $|H_j|\ge deg_L(e)+1$ whenever  $e\in H_j$. 
        Indeed, fix $e \in H_1$. If $|H_2|>deg_L(e)$, then one of the elements of $H_2$ is not connected in $L$ to $e$, so there is an SJR. So, we 
        may assume $|H_2|\le deg_L(e) <|H_1|$. Similarly, we have $|H_1|<|H_2|$, a contradiction. 
        
         For $m>2$ 
        this condition is not sufficient, for example in the d-JYST contraption we have $\Delta(L)=d, |H_i|=2d-1$ and there is no 
        SJR. This shows that 
 a condition of the form $|H_i|>C deg_L(e) $        for $e\in H_i$ would not suffice, if $C<2$.

Theorem  \ref{thm:pennyfirst} has no local version:
\begin{observation}\label{stars}
For every $K\in \mathbb{N}$ there exists a graph $G$ and a partition $\cv$ of $V(G)$ such that $\frac{|V_i|}{deg(x)} \ge K$ for every $x \in V_i$ 
and yet there is 
no IT.    
\end{observation}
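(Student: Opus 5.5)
Given $K$, I will build a star-based example in which every part $V_i$ is far larger than the degrees of its own vertices, yet some part is forced to lose all of its vertices.

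The plan uses $m$ \emph{leaf classes} and one \emph{center class}. Take $m$ disjoint sets $V_1,\dots,V_m$, each of size $N$, where $N$ is large. Let the center class be $V_0=\{c_1,\dots,c_M\}$, and join each center $c_j$ to exactly one vertex of each $V_i$, for $i\in[m]$. Every center then has degree $m$. To make an IT impossible, I want that for every choice of representatives $x_i\in V_i$, $i\in[m]$, every center is adjacent to at least one chosen $x_i$.

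\textbf{Coordinate encoding.} Identify the centers with all $m$-tuples $(y_1,\dots,y_m)\in V_1\times\dots\times V_m$, so $M=N^m$. Call such a tuple a \emph{bad} tuple. The center attached to $(y_1,\dots,y_m)$ is joined to each $y_i$. Now suppose $x_1,\dots,x_m$ are chosen. The center indexed by $(x_1,\dots,x_m)$ itself is adjacent to all the chosen vertices, so it cannot be chosen. Other centers may, however, be free. For instance, with $m=1$, the center $(y_1)$ is free whenever $y_1\neq x_1$. So the encoding must be reversed: a center $c$ should be blocked by \emph{every} selection, not just by one.

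\textbf{Corrected encoding.} For each center $c$, partition the leaf classes among the neighbourhoods so that every selection hits $N(c)$. This happens exactly when $N(c)\supseteq V_i$ for some $i$. So each center $c_i$ is made adjacent to all of $V_i$, and the degrees become $\deg(c_i)=N$ and $\deg(x)=1$ for $x\in V_i$. Check the local condition. For $V_0=\{c_1,\dots,c_m\}$ we get $|V_0|/\deg(c_i)=m/N$, which is small, so this fails. To fix it, give each leaf class many centers instead: the condition for $V_0$ needs $|V_0|\ge KN$. This could be arranged by attaching many centers to $V_i$, but that raises the degrees of the leaves.

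\textbf{Balancing.} Choose $|V_i|=N$, and let $V_0$ consist of $t$ centers per class, each center joined to all of its class. Then leaves have degree $t$ and centers have degree $N$. The local condition requires $N\ge Kt$ and $mt\ge KN$. Take $t=N/K$ and $m=K^2$; both inequalities hold. Now suppose there is an IT. It picks some center $c\in V_0$, attached to class $V_i$. The representative $x_i\in V_i$ is then adjacent to $c$. So there is no IT. The graph is simply disjoint stars grouped together, and the main obstacle was just this balancing of parameters.
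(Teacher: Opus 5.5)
Your final balanced construction is correct and is essentially the paper's: taking $t=1$, $N=K$, $m=K^2$ gives exactly the paper's example of $K^2$ disjoint stars $K_{1,K}$ whose centers form the extra class $V_0$, and your no-IT argument is the same one-liner. The only minor points are that you need $K \mid N$ for $t=N/K$ to be an integer, and that for $t>1$ the components are $K_{t,N}$ rather than stars.
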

\begin{proof}
    Let $V_0= [K^2]\times \{ 0 \} $,  $V_i=[K]\times \{ i \}$ for $1\le i \le K^2$, and a vertex $(i,0) \in V_0$ is connected exactly to the vertices in 
    $V_i$.
\end{proof}

But Theorem \ref{lineofgraphs} may have a local version. We do not know an example in which $G$ is a line graph of a graph, and the condition 
$|V_i|>\Delta_G(V_i)+1$ does not suffice to guarantee the existence of an IT.

\section{Short proofs of the countable Milner-Shelah theorem}

Next we turn to the infinite version of the problem.  Theorem \ref{localms} remains true if both the set sizes and the number of sets are infinite. 
This was proved  in the countable case independently by 
Bollobás and Milner  \cite{bollobas1973theorem}, and by Shelah  
\cite{shelah1974substitute}.  The general  
case was proved by Milner and Shelah \cite[Theorem 3]{milner1974sufficiency}. A simpler proof  was given by Tverberg (see 
\cite{tverberg1976milner}). Yet another  proof was provided by Podewski and 
Steffens (see \cite[p. 166]{podewski1976injective}). Our contribution is a simple elementary proof,  and an even simpler proof based on the 
``marriage theorem'', i.e., a characterization of the existence of transversals \cite[Theorem 7]{podewski1976injective}. 

For this purpose, we switch to graph-theoretic, rather than transversal-theoretic terminology, one reason being that this is the terminology used in 
the existing literature on the subject.  Following the terminology in 
\cite{aharoni1983general},  $G=(M,W,E)$  denotes a (possibly infinite) 
bipartite graph where the vertex 
classes are 
referred to as the set of \emph{men} and set of \emph{women}, respectively.
An \emph{espousal} of $G$ is a matching (i.e., a set $F\subseteq E$ of pairwise disjoint edges) that covers $M$. A bipartite graph $G$ is 
\emph{espousable} if 
it has an espousal and \emph{inespousable} otherwise.   From a family 
$\{ V_i:\ i\in I \}$ of sets, one can construct a bipartite graph $G=(M,W,E)$ by letting $M:=\{ V_i:\ i\in I \}$ and $W:=\bigcup_{i\in I}V_i$ as 
well as $\{ V_i, v 
\}\in E$ if and only if $v\in V_i$. Then an SDR for $\{ V_i:\ i\in I \}$ corresponds to an espousal of $G=(M,W,E)$  and vice versa.  The condition 
in 
Theorem \ref{localms}, which we will refer to as  the \emph{Milner-Shelah condition}, can be formulated as follows:  `there is no isolated vertex 
in $M$ and for every 
edge $\{ m,w \}\in E$ with $m\in M$ and $w\in W$, we have $deg_G(m) \geq deg_G(w)$', where $deg_G(v)$ denotes the degree of  $v$ in $G$. 

\begin{theorem}[{Milner and Shelah, \cite[Theorem 3]{milner1974sufficiency}}]\label{thm: MilnerShelah}
If $G=(M,W,E)$  is a (possibly infinite) bipartite graph that satisfies the Milner-Shelah condition, then  $G$ is espousable.
\end{theorem}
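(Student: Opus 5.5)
The plan is to prove the countable case by an elementary augmenting-path argument and to isolate the transfinite bookkeeping as the only nontrivial issue. The basic tool is a fractional weighting. Give each edge $mw$ the weight $1/deg_G(m)$, read as $0$ when $deg_G(m)$ is infinite. Every man of finite degree then sends total weight exactly $1$. A woman $w$ receives at most $\sum_{m\in N(w)} 1/deg_G(m)\le deg_G(w)\cdot \frac{1}{deg_G(w)}=1$, since $deg_G(m)\ge deg_G(w)$ for every edge $mw$ by the Milner--Shelah condition.

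First I will split $M$ into $M_f$, the men of finite degree, and $M_\infty$, the men of infinite degree. If $m\in M_f$ and $mw\in E$, then $deg_G(w)\le deg_G(m)<\infty$. So the subgraph spanned by $M_f\cup N(M_f)$ has only finite degrees. For a finite $X\subseteq M_f$, summing the weights gives $|X|\le |N(X)|$. By the compactness form of Hall's theorem for men of finite degree (Rado), there is a matching $F_0$ covering $M_f$.

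Next I will enumerate $M_\infty=\{m_1,m_2,\dots\}$ and insert these men one at a time, keeping every man that has already been covered still covered, and keeping the matching injective. To insert $m_n$, I use alternating paths relative to the current matching $F$. Let $R$ be the set of women reachable from $m_n$ by $F$-alternating paths. If some woman in $R$ is unmatched, augment along the path to her. Otherwise every woman in $R$ is matched. The men reached from $R$ either include a man of infinite degree, whose infinitely many neighbours lie in $R$, or they are all in $M_f$. In the second case, if $R$ were finite, the weighting argument applied to those men together with $m_n$ would force $|R|\ge |R|+1$, which is impossible. Hence $R$ is infinite in either case. Since $G$ is countable, infinitely many women in $R$ are joined to $m_n$ by alternating paths. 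I then want to choose the path so that the earlier men, $m_1,\dots,m_{n-1}$ and an initial segment of $M_f$, keep their current partners.

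The main obstacle is making the limit exist, that is, ensuring that each man's partner changes only finitely often so that the union of the stages is a genuine espousal. The first thing I would try is a priority scheme. Fix an enumeration of all of $M$. At stage $n$, augment only along alternating paths that avoid the partners of the first $n$ men in this enumeration (the frozen men); the infinitude of $R$ shown above is what should make such a path available. If this freezing cannot be justified directly, the fallback is to replace the greedy stage by the Podewski--Steffens characterization \cite[Theorem 7]{podewski1976injective}. In that route one shows that any obstruction to espousability would contain a finite substructure violating the weighting inequality, which the Milner--Shelah condition forbids.
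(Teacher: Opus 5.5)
\textbf{There is a genuine gap: the insertion step for infinite-degree men, and the freezing that is supposed to make the limit an espousal, are both unproved.} Your first step is sound. The weights $1/deg_G(m)$ give $|X|\le|N_G(X)|$ for finite $X\subseteq M_f$, and Rado's theorem then yields a matching covering $M_f$. (Like the paper, you only handle countable $G$; the general case of the statement remains a citation.)

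\emph{Inserting $m_n\in M_\infty$.} If every woman in $R$ is matched, there is no finite augmenting path. Knowing that $R$ is infinite gives you nothing, because it is automatic: $N_G(m_n)\subseteq R$. Your counting step also cannot involve $m_n$, whose edges carry weight $0$, so it never yields $|R|\ge|R|+1$. What you need is an infinite $F$-alternating path starting at $m_n$, so that you can shift partners along it. Proving that such a path exists is exactly where the Milner--Shelah condition must be used infinitarily, and your proposal has no argument for it. In the paper this is the directed-tour argument in the proof of Claim \ref{clm: 1obstructions violates}. Contract each $e\in F$ to a vertex $v_e$ and orient the other edges from men to women. Then $deg_D^+(v_e)\ge deg_D^-(v_e)$, and a vertex of infinite in-degree has only in-neighbours of infinite out-degree. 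Hence a tour from $m_n$ that always prefers unvisited vertices never gets stuck and visits each vertex finitely often. It therefore contains an infinite directed path from $m_n$.

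\emph{Freezing.} This step is left unjustified, and the infinitude of $R$ cannot justify it. After the frozen pairs are deleted, the Milner--Shelah condition is not inherited. A man adjacent to a frozen woman loses degree while his other neighbours need not, so $deg^+\ge deg^-$ fails at some vertices and the tour can get stuck. You have to bound the damage. The total deficiency is at most $\sum deg_G(w)$, summed over frozen women $w$ of finite degree, because men of finite degree only see women of finite degree. Since $m_n$ has infinite degree, you can restart from it along fresh edges more often than that. This is precisely the role of the number $n$ and the $(n+1)$-element set $Y\subseteq N_H(m_0)$ in the paper's Lemma \ref{lem: key}. That lemma avoids tracking a matching altogether: its invariant is Hall's condition for the graph left after finitely many deletions.

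\emph{The Podewski--Steffens fallback.} As described, it would not work. Under the Milner--Shelah condition every finite set of men satisfies Hall's condition, and the weight inequality holds at every woman. So an obstruction contains no finite violating substructure. The paper instead shows that a $1$-obstruction satisfying the condition would contain an infinite alternating path starting in $W$ with an edge of $F$, contradicting Observation \ref{obs: no infMalter 1obst}.
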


The classical proof for finite graphs (originated in \cite{graham1969some}) reads as follows. Given a subset $X$ of $M$, we have 
\[ \left|X \right|=\sum_{\{ m,w \} \in E, m\in X}\frac{1}{deg_G(m)}\leq  \sum_{\{ m,w \} \in E, m\in X }\frac{1}{deg_G(w)}\leq
\sum_{\{ m,w \} \in E, w\in N_G(X) }\frac{1}{deg_G(w)}= \left|N_G(X) \right|, \]
where $N_G(X)$ denotes the set of women having a neighbour in $X$, and the inequalities follow, respectively, from the Milner-Shelah condition 
and from the fact 
that \[  \{ \{ m,w \} \in E, m\in X \}\subseteq \{ \{ m,w \} \in E, w\in N_G(X) \}. \] So, Hall's condition holds. An alternative proof, under slightly 
weaker assumptions, reads as follows:
\begin{theorem}\label{thm: MS finite}
Let $G=(M,W,E)$  be a finite bipartite graph and let $F\subseteq E$ be a maximum matching in $G$. If there is no isolated vertex in $M$ and 
$deg_G(m)\geq deg_G(w)$ holds for every $\{ m,w \}\in F$ with $m\in M$ and $w\in W$, then $M \subseteq \bigcup F$.
\end{theorem}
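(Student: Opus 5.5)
Suppose, for contradiction, that some man $m_0\in M$ is not covered by the maximum matching $F$. I will use the standard alternating-path (Hungarian tree) argument. Let $X\subseteq M$ be the set of men reachable from $m_0$ by $F$-alternating paths: paths that start at $m_0$, use non-$F$ edges from men to women and $F$-edges from women back to men. Let $Y\subseteq W$ be the set of women reachable in the same way.

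\textbf{Step 1: the structure of $X$ and $Y$.} By maximality of $F$ there is no augmenting path. Hence every woman in $Y$ is covered by $F$, and her $F$-partner lies in $X$. Moreover, $N_G(X)=Y$, since any neighbour of a man in $X$ is reached by extending the alternating path by one more edge. So $F$ restricts to a bijection between $Y$ and $X\setminus\{m_0\}$, and $|X|=|Y|+1$. Also $Y\neq\emptyset$ because $m_0$ is not isolated.

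\textbf{Step 2: double counting.} I count the edges between $X$ and $Y$ in two ways, as in the classical proof. Since $N_G(X)=Y$, every edge at a man of $X$ goes to $Y$, so the number of such edges is $\sum_{m\in X}deg_G(m)$. This is at most the total number of edges incident to $Y$, namely $\sum_{w\in Y}deg_G(w)$. For each $w\in Y$ with $F$-partner $m_w\in X\setminus\{m_0\}$, the hypothesis gives $deg_G(w)\le deg_G(m_w)$. Therefore
\[
\sum_{m\in X}deg_G(m)\le\sum_{w\in Y}deg_G(w)\le\sum_{m\in X\setminus\{m_0\}}deg_G(m),
\]
and hence $deg_G(m_0)\le 0$. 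This contradicts the assumption that $m_0$ is not isolated.

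\textbf{Main point to check.} The only delicate step is verifying that $N_G(X)=Y$ and that $F$ matches $Y$ bijectively onto $X\setminus\{m_0\}$. Both follow from the definition of alternating reachability together with the absence of augmenting paths (Berge's lemma). Once that is in place, the degree comparison along the $F$-edges gives the contradiction. Note that the hypothesis is needed only on the edges of $F$, which is exactly the weakening over the Milner--Shelah condition.
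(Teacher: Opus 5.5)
Your proof is correct. Its core is the same double count as in the paper: $\sum_{m\in X}deg_G(m)\le\sum_{w\in N_G(X)}deg_G(w)$, followed by the degree hypothesis applied only along the $F$-edges that match $N_G(X)$ into $X$. What differs is how you obtain $X$.

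The paper invokes K\H{o}nig's theorem to get a cover $C$ with $|C|=|F|$ and sets $X=M\setminus C$, which handles all unmatched men at once. It then turns the chain into an equality, which by positivity of degrees forces $X\subseteq\bigcup F$. You instead grow the alternating tree from a single unmatched man $m_0$, using only the trivial half of Berge's lemma (an augmenting path would enlarge $F$), and the chain gives $deg_G(m_0)\le 0$ directly. In effect you have inlined the proof of K\H{o}nig's theorem: for the cover built in its standard proof, the paper's $X$ is exactly the union of your sets $X$ over all unmatched men.

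The paper's version is shorter because K\H{o}nig supplies the structural facts for free (each $F$-edge meets $C$ once, and $N_G(X)$ is matched by $F$ into $X$). Yours is self-contained and localizes the violated inequality to one alternating tree. It is also essentially the finite counterpart of the paper's later argument for $1$-obstructions, where contracting the $F$-edges yields a digraph with $deg_D^{+}\ge deg_D^{-}$ at every vertex except a source.
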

\begin{proof}
By Kőnig's theorem there exists a vertex cover $C$ with $\left|C \right|=\left|F \right|$. Then $C$ consists  of a single vertex from each edge 
of $
F$. For $X:=M\setminus C$ we  have $N_G(X)\subseteq C\cap W$ because $C$ is a vertex cover. In particular, every $w\in N_G(X)$ is an 
endpoint of some $e\in F$ 
whose other endpoint is not in $C$ and therefore lies in $X$.

  From the definition of 
  $N_G(X)$, it follows directly that \[ \sum_{m\in X}deg_G(m) \leq \sum_{w\in N_G(X)}deg_G(w). \] Summing the  inequalities in the 
  Milner-Shelah 
  condition 
  corresponding to
  the 
  edges of $F$ meeting $N_G(X)$ yields
  \[\sum_{w\in N_G(X)}deg_G(w)\leq \sum_{m\in X\cap \bigcup F}deg_G(m) \leq \sum_{m\in X}deg_G(m).  \]
  Combining these inequalities yields equality throughout. In particular,  $\sum_{m\in X\cap \bigcup F}deg_G(m) =
  \sum_{m\in X}deg_G(m)$. Since $deg_G(m)>0$ for every $m\in M$ by assumption, we conclude that  $X\cap \bigcup F=X$, i.e. 
  $X\subseteq \bigcup F$. We have seen that $C\subseteq \bigcup F$, in particular $M\setminus X=C\cap M\subseteq \bigcup F$. By 
  combining 
  these we get $M \subseteq 
  \bigcup F$.
  \end{proof}

 
From now on, we focus on the countable case. 
\subsection{An elementary proof} Suppose  that $G=(M,W,E)$ is a countable bipartite graph that satisfies the Milner-Shelah condition. Let 
us call a subgraph $G'=(M',W',E')$ \emph{saturated} if $N_{G'}(X)=N_G(X)$ for every $X\subseteq M'$. Note that saturated subgraphs inherit the 
Milner-Shelah condition.
\begin{claim}\label{clm: GsatisHall}
 $G$ satisfies Hall's condition. 
\end{claim}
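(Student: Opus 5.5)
The plan is to verify Hall's condition in its form for possibly infinite graphs, namely $|N_G(X)|\ge |X|$ for every \emph{finite} $X\subseteq M$, by reducing to the double-counting argument from \cite{graham1969some} recalled above for finite graphs. Fix a finite $X\subseteq M$. First we dispose of infinite degrees. If some $m\in X$ has $deg_G(m)$ infinite, then $N_G(X)\supseteq N_G(m)$ is infinite, and the inequality holds trivially. So from now on we assume every $m\in X$ has finite degree. The Milner-Shelah condition gives $deg_G(m)\ge 1$ for each $m\in X$. It also gives $deg_G(w)\le deg_G(m)<\infty$ for every edge $\{m,w\}$ with $m\in X$. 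Hence every $w\in N_G(X)$ has finite positive degree. Since $X$ is finite and each of its elements has finite degree, $N_G(X)$ is finite and the set $E_X$ of edges meeting $X$ is finite.

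Next we run the weighted count on the finite edge set $E_X$:
\[ |X|=\sum_{\{m,w\}\in E_X}\frac{1}{deg_G(m)}\le \sum_{\{m,w\}\in E_X}\frac{1}{deg_G(w)}=\sum_{w\in N_G(X)}\frac{|N_G(w)\cap X|}{deg_G(w)}\le |N_G(X)|. \]
The first equality holds because each $m\in X$ contributes $deg_G(m)$ terms of size $1/deg_G(m)$. The first inequality is the Milner-Shelah condition applied edge by edge. The final inequality uses $|N_G(w)\cap X|\le deg_G(w)$, where $deg_G(w)$ is the degree in the whole, possibly infinite, graph $G$. The only new point compared with the finite proof is that every degree appearing here is finite, so all the sums make sense.

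The main obstacle, mild as it is, is to make sure that degrees measured in the infinite graph $G$ cause no trouble. This is handled by the preliminary case split on whether some $m\in X$ has infinite degree. In the remaining case every quantity involved is finite and positive, and the estimate $|N_G(w)\cap X|\le deg_G(w)$ goes in the right direction. No compactness argument is needed for this claim; compactness is only needed later, to pass from Hall's condition to an espousal.
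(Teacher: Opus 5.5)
Your proof is correct and is essentially the paper's. The paper argues by contradiction. It first notes that a violating $X$ may be taken finite: in a countable graph an infinite violator has finite $N_G(X)$, so it contains a violator of size $|N_G(X)|+1$, which is what justifies your restriction to finite $X$. It then applies the finite theorem to the finite saturated subgraph $G[X\cup N_G(X)]$, whereas you run the same double count directly in $G$ after disposing of infinite degrees.

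One side remark of yours is inaccurate, though it does not affect the claim. Compactness does not turn Hall's condition into an espousal when men can have infinite degree: a man joined to all $w_i$, together with men $m_i$ each joined only to $w_i$, satisfies Hall's condition but is inespousable. This is exactly why the paper needs Lemma \ref{lem: key}, where the Milner--Shelah condition is used again.
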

\begin{proof}
 Suppose, for a contradiction, that there is an $X\subseteq M$ with $\left|N_G(X) \right|<\left|X \right|$. We may assume that $X$ is finite, since 
 otherwise we replace $X$ by an $(\left|N_G(X) \right|+1)$-element subset of $X$. Then 
 the saturated subgraph $G[X\cup N_G(X)]$ is  finite, satisfies the 
 Milner-Shelah condition but is inespousable. This contradicts the 
 already established finite case of the theorem. 
\end{proof}

\begin{lemma}\label{lem: key}
If $H$ is obtained  by deleting finitely many vertices from $G$ and $H$ satisfies Hall's condition, then for every $m\in M \cap V(H)$ there 
exists  a $w\in N_H(m)$ such that deleting $m$ and $w$ from $H$ yields a graph satisfying Hall’s condition.
\end{lemma}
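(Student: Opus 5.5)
The plan is to argue by contradiction. Fix $m\in M\cap V(H)$ and suppose that for every $w\in N_H(m)$ the graph $H-m-w$ violates Hall's condition.

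First I would normalise the violators, as in the proof of Claim~\ref{clm: GsatisHall}. If $X\subseteq M\cap V(H)\setminus\{m\}$ satisfies $|N_{H-w}(X)|<|X|$, then $N_{H-w}(X)$ is finite. Replacing $X$ by a subset of size $|N_{H-w}(X)|+1$ gives a finite violator $X_w$. Since $H$ itself satisfies Hall's condition, $|N_H(X_w)|\ge|X_w|$. Removing $w$ costs at most one neighbour, so $w\in N_H(X_w)$ and $|N_H(X_w)|=|X_w|$. Thus every $w\in N_H(m)$ lies in the neighbourhood of a finite \emph{tight} set $X_w\subseteq M\setminus\{m\}$, meaning $|N_H(X_w)|=|X_w|$.

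Next I would use the standard submodularity fact. If $A,B$ are finite tight sets in a graph satisfying Hall's condition, then
\[ |N(A\cup B)|+|N(A\cap B)|\le|N(A)|+|N(B)|=|A|+|B|=|A\cup B|+|A\cap B|. \]
Hall's condition applied to $A\cap B$ then forces $A\cup B$ to be tight. Consequently, for any finite $F\subseteq N_H(m)$, the set $Y_F=\bigcup_{w\in F}X_w$ is a finite tight set avoiding $m$ with $F\subseteq N_H(Y_F)$.

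If $N_H(m)$ is finite, take $F=N_H(m)$. Then $N_H(Y_F\cup\{m\})=N_H(Y_F)$ has size $|Y_F|<|Y_F\cup\{m\}|$, contradicting Hall's condition in $H$. This settles the case $deg_H(m)<\infty$.

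The main obstacle is the case where $N_H(m)$ is infinite. Here the Milner--Shelah condition on $G$ and the finiteness of the deleted set $D$ must be used, since without them the lemma is false. My first attempt is a weighted count, imitating the classical finite proof. Let $Y=Y_F$ be tight and finite, and let $\mu(x,w)=1/deg_G(w)$, with the convention $1/\infty=0$. Fix a perfect matching of $H[Y\cup N_H(Y)]$, which exists by the finite Hall theorem since the set is tight. Summing the Milner--Shelah inequalities along $G$-edges at $Y$, every $G$-edge leaving $Y$ goes to $N_H(Y)\cup(D\cap W)$. This should give
\[ |Y|\le\sum_{x\in Y,\ xw\in E(G)}\frac1{deg_G(w)}\le |N_H(Y)|+|D\cap W|-\sum_{x\notin Y,\ xw\in E(G),\ w\in N_H(Y)}\frac1{deg_G(w)}. \]
Tightness then forces the total $\mu$-mass of edges from $M\setminus Y$ (in particular from $m$) into $N_H(Y)$ to be at most $|D\cap W|$. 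Letting $F$ grow through finite subsets of $N_H(m)$, the edges $mw$ with $w\in F$ all contribute. This yields a contradiction as soon as $\sum_{w\in N_H(m)}1/deg_G(w)>|D\cap W|$.

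The remaining difficulty is when almost all neighbours of $m$ have infinite degree, so that this mass is $0$. There I would instead pass to the finite saturated subgraph $G[Y\cup N_G(Y)]$, which inherits the Milner--Shelah condition. I would then apply Theorem~\ref{thm: MS finite} to it, relative to a maximum matching extending the perfect matching of $Y$. The aim is to show that an infinite-degree woman $w\in N_H(m)$ can always be freed. I expect this step to require the most care.
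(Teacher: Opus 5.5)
Your normalisation to finite tight sets $X_w$, the submodularity argument that $Y_F$ is tight (a valid substitute for the paper's perfect-matching claim), and the finite-degree case are all correct.

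The gap is in the case $deg_H(m)=\infty$. Your weighted count only gives $\sum_{w\in F}1/deg_G(w)\le|D\cap W|$ for every finite $F\subseteq N_H(m)$. This is no contradiction when $\sum_{w\in N_H(m)}1/deg_G(w)$ converges to a value at most $|D\cap W|$. That happens, e.g., when $D\cap W\neq\emptyset$ and the $G$-degrees of the neighbours of $m$ grow geometrically, and nothing in your argument excludes it.

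Conversely, the case you single out as the residual difficulty cannot occur. Each $w\in N_H(m)$ lies in $N_H(X_w)$ with $X_w$ finite and tight. So each $x\in X_w$ has finite $H$-degree, hence finite $G$-degree, since only finitely many vertices were deleted. The Milner--Shelah condition then gives $deg_G(w)\le deg_G(x)<\infty$. Your final paragraph therefore treats an empty case and leaves the real one open; in any event it is a plan rather than an argument.

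The missing idea is to count unweighted degrees and to sum the Milner--Shelah inequalities only along the perfect matching of $H[Y\cup N_H(Y)]$, which you introduce but never use. Put $n:=\sum_{w\in D\cap W,\ deg_G(w)<\infty}deg_G(w)$, take $F\subseteq N_H(m)$ with $|F|=n+1$, and let $Y=Y_F$. Three inequalities follow.

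\begin{enumerate}
\item By the observation above, every $G$-neighbour of $Y$ has finite degree. So the $G$-edges from $Y$ into $D$ number at most $n$, i.e.\ $\sum_{x\in Y}deg_G(x)\le n+\sum_{x\in Y}deg_H(x)$.
\item Count the $H$-edges at $N_H(Y)$, including the $n+1$ edges from $m\notin Y$ to $F\subseteq N_H(Y)$. This gives $\sum_{w\in N_H(Y)}deg_G(w)\ge n+1+\sum_{x\in Y}deg_H(x)$.
\item The perfect matching, together with the Milner--Shelah condition on its edges, gives $\sum_{w\in N_H(Y)}deg_G(w)\le\sum_{x\in Y}deg_G(x)$.
\end{enumerate}

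These three inequalities are incompatible, and this is how the paper concludes. The point is that each edge from $m$ now contributes a full unit of excess instead of $1/deg_G(w)$, which is exactly what your weighted version throws away.
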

\noindent Since $G$ satisfies Hall's condition (see Claim \ref{clm: GsatisHall}), the countable case now follows from Lemma \ref{lem: key} by a 
straightforward recursive 
construction: enumerate the vertices of $M$ and match 
them one by one, deleting matched vertices at each step.
\begin{proof}[Proof of Lemma \ref{lem: key}]
 Fix $m_0\in M \cap V(H)$ and suppose, for a contradiction, that no $w\in N_H(m_0)$ has the required property. Then for every $w\in N_H(m_0)$ 
 we choose a finite set
 $X_w\subseteq (M \cap V(H))\setminus \{ m_0 \} $ such that $w\in N_H(X_w)$ and $\left|X_w \right|=\left|N_H(X_w) \right|$.
 
 \begin{claim}\label{clm: critical union}
  For every finite $Y\subseteq N_H(m_0)$, for the set $X_{Y}:=\bigcup_{w\in Y}X_w$, $H[X_Y \cup N_H(X_{Y}) ]$ has a perfect 
  matching. 
  In particular $\left|X_{Y} \right|=\left|N_H(X_{Y}) \right|$. 
 \end{claim}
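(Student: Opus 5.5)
The plan is to show by induction on $|Y|$ that $X_Y$ is \emph{critical}, meaning $|X_Y|=|N_H(X_Y)|$, using the submodularity of the neighbourhood function together with Hall's condition in $H$. The perfect matching will then follow from the finite Hall theorem. Throughout, each $X_w$ is finite, so for finite $Y$ the set $X_Y$ is finite. Since $H$ is obtained from the locally finite situation only by finitely many deletions, every set involved ($X_Y$, and the women it sees inside the relevant finite sets) stays finite. Actually we only need finiteness of $N_H(X_Y)$, and this holds because $|N_H(X_w)|=|X_w|<\infty$ and $N_H(X_Y)=\bigcup_{w\in Y}N_H(X_w)$. If $Y=\emptyset$ everything is empty and trivial.

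\textbf{Key step: the union of two finite critical sets is critical.} Let $A,B\subseteq M\cap V(H)$ be finite with $|N_H(A)|=|A|$ and $|N_H(B)|=|B|$. We have
\[ N_H(A\cup B)=N_H(A)\cup N_H(B), \qquad N_H(A\cap B)\subseteq N_H(A)\cap N_H(B). \]
Combining these with inclusion–exclusion gives
\[ |N_H(A\cup B)|+|N_H(A\cap B)|\le |N_H(A)|+|N_H(B)|=|A|+|B|. \]
Since $H$ satisfies Hall's condition, $|N_H(A\cap B)|\ge |A\cap B|$. Therefore
\[ |N_H(A\cup B)|\le |A|+|B|-|A\cap B|=|A\cup B|. \]
Hall's condition also gives $|N_H(A\cup B)|\ge|A\cup B|$, so equality holds. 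Applying this repeatedly, adding one $X_w$ at a time, shows that $X_Y$ is critical for every finite $Y\subseteq N_H(m_0)$. This is the "in particular" part of the claim.

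\textbf{Perfect matching.} Consider the finite bipartite graph $H':=H[X_Y\cup N_H(X_Y)]$. For every $Z\subseteq X_Y$ we have $N_{H'}(Z)=N_H(Z)$, because all $H$-neighbours of $Z$ lie in $N_H(X_Y)$. Hence Hall's condition for $H$ transfers to $H'$ on the side $X_Y$. By the finite Hall theorem, $H'$ has a matching covering $X_Y$. This matching has $|X_Y|=|N_H(X_Y)|$ edges, so it also covers $N_H(X_Y)$, and is therefore a perfect matching of $H'$.

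The only point needing care is the submodularity step, specifically the inclusion $N_H(A\cap B)\subseteq N_H(A)\cap N_H(B)$ combined with Hall's lower bound for $A\cap B$. Everything else is routine.
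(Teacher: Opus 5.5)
Your proof is correct, but it runs in the opposite order from the paper's. The paper never proves criticality of $X_Y$ directly. It first applies Hall's theorem to the finite saturated subgraph $H'=H[X_Y\cup N_H(X_Y)]$ to get an espousal $F$. For each $w\in Y$, $F$ maps $X_w$ injectively into $N_H(X_w)$, which has exactly $|X_w|$ elements, so $F$ covers $N_H(X_w)$. Taking the union over $w\in Y$ shows that $F$ covers $N_H(X_Y)$. Hence $F$ is perfect, and $|X_Y|=|N_H(X_Y)|$ falls out as a corollary.

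You instead establish $|X_Y|=|N_H(X_Y)|$ first, by the submodularity argument showing that finite critical sets are closed under pairwise union. This uses only Hall's condition, applied to $A\cap B$ and $A\cup B$. You then use Hall's theorem only to upgrade the cardinality equality to a perfect matching. The paper's route is shorter and handles all of $Y$ at once without induction, with the matching itself certifying tightness. Yours isolates a reusable lattice fact (your equality chain also shows that $A\cap B$ is critical) and gets the cardinality statement without invoking Hall's theorem at all.

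One small inaccuracy: your remark that $H$ arises from a ``locally finite situation'' is false. $G$ may have vertices of infinite degree, and the lemma itself later shows that $deg_H(m_0)$ is infinite. However, you immediately replace this with the correct justification, $|N_H(X_w)|=|X_w|<\infty$, so nothing in your argument depends on it.
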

 \begin{proof}
  The finite saturated subgraph $H':=H[X_Y\cup N_H(X_{Y})]$ of $H$ satisfies Hall's condition, since $H$ does. Thus by Hall's theorem it 
  has an espousal 
  $F$. Then for every 
  $w\in Y$, the matching  $F$ covers in particular the finite set $X_w $. Hence, by $\left|X_w \right|=\left|N_H(X_w) \right|$, the matching $F$ 
  covers the set 
  $N_H(X_w)$ as 
  well. It follows that $F$ covers  $N_H(X_{Y})=\bigcup_{w\in Y}N_H(X_w)$ and hence $F$ is a perfect matching of $H'$.
 \end{proof}
If $deg_H(m_0)$ is finite, then by applying Claim \ref{clm: critical union} with $Y=N_H(m_0)$ we conclude that for $X:=X_{Y}\cup 
\{ m_0 \}$ we have $\left|X \right|=\left|N_H(X) \right|+1$.  This 
contradicts 
the 
assumption 
that 
$H$ satisfies Hall's condition. Therefore $deg_H(m_0)$ is infinite. 

 Let 

\[ n:=\sum_{w\in W\setminus V(H),\ deg_G(w)<\aleph_{0}} deg_G(w).  \]
Take an $(n+1)$-element subset $Y$ of $N_H(m_0)$ and let $X:=X_Y$. 
We claim that no $m\in X$ is adjacent in $G$ to a vertex $w$ of infinite degree. Indeed, if such a vertex $w$ existed, then the Milner-Shelah 
condition would imply that
$deg_G(m)=\aleph_{0}$ and hence $deg_H(m)=\aleph_{0}$, contradicting the fact that $X$ is a finite set with $\left|X \right|=\left|N_H(X) 
\right|$. 
Therefore
\[ N_G(X)\setminus N_H(X)\subseteq \{w\in W\setminus V(H):\ deg_G(w)<\aleph_{0} \} \]
and thus \[ (n+1)+\sum_{m\in X}deg_H(m)>\sum_{m\in X}deg_G(m). \]
Since   $m_0\notin X$  and $m_0$ has at least $n+1$ neighbours in $N_H(X)$ in $H$  by  $  N_H(X) \supseteq Y $, we get:
\[ \sum_{w\in N_H(X)} deg_G(w) \geq \sum_{w\in N_H(X)} deg_H(w)\geq  (n+1)+ \sum_{m\in X}deg_H(m)> \sum_{m\in X}deg_G(m).\]

On the other hand, the finite saturated subgraph $H[X\cup N_H(X)]$ of $H$  admits a perfect matching $F$ by Claim \ref{clm: critical union} and 
by summing the inequalities in the 
Milner-Shelah 
condition for the edges of $F$ in the graph $G$  we conclude
\[\sum_{w\in N_H(X)} deg_G(w) \leq \sum_{m\in X}deg_G(m), \]
a contradiction.
\end{proof}
\subsection{A proof based on the  Podewski-Steffens theorem}
We call $G=(M,W,E)$ \emph{critical} if 
it is espousable and  every espousal covers all vertices of $W$. A graph  $G=(M,W,E)$  is a 
$1$\emph{-obstruction}  if there is an $m\in M$ such that $G-m$ is critical. A matching $F$ \emph{fits} a $1$\emph{-obstruction} $G$ if there 
is an $m\in M$ such that $G-m$ is critical and $F$ is an espousal of $G-m$.  A path $P$ is $F$-alternating if its edges alternate between $F$ and 
$E\setminus F$
\begin{observation}\label{obs: no infMalter 1obst}
 If $G$ is a $1$-obstruction and $F$ is a matching that fits $G$, then $G$ does not have an infinite $F$-alternating path that starts in 
 $W$ with an edge in $F$.
\end{observation}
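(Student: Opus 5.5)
We argue by contradiction. Suppose $P=(w_0,m_0,w_1,m_1,\dots)$ is an infinite $F$-alternating path with $w_0\in W$ and $w_0m_0\in F$. Then $w_{i}m_{i}\in F$ and $m_iw_{i+1}\in E\setminus F$ for every $i$. Fix $m^*\in M$ such that $G-m^*$ is critical and $F$ is an espousal of $G-m^*$. Every $m_i$ is covered by $F$, so $m_i\neq m^*$, and hence $P$ lies entirely in $G-m^*$. The plan is to use $P$ to build a second espousal of $G-m^*$ that misses a woman, contradicting criticality.

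The construction is to shift along the path. Let
\[ F':=\bigl(F\setminus\{w_im_i:\ i\ge 0\}\bigr)\cup\{m_iw_{i+1}:\ i\ge 0\}. \]
We check that $F'$ is a matching. Each $m_i$ is now matched to $w_{i+1}$ instead of $w_i$. Each $w_{i+1}$ loses its old partner $m_{i+1}$ and gains $m_i$. The vertices of $P$ are distinct, and the $F$-edges at the vertices of $P$ are exactly the edges $w_im_i$, so no other edge of $F$ meets $P$. Hence no conflicts arise. Every man covered by $F$ is still covered by $F'$: the men on $P$ are rematched and all other men keep their $F$-edges. So $F'$ is an espousal of $G-m^*$.

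However, $w_0$ is not covered by $F'$. Its only $F$-edge, $w_0m_0$, was removed, and $w_0$ does not appear in any added edge because the added edges meet only $w_1,w_2,\dots$. So $F'$ is an espousal of $G-m^*$ that does not cover all of $W$, contradicting the criticality of $G-m^*$.

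The main point to verify is that the path is infinite in the forward direction with no endpoint. That is exactly why the shift leaves no man unmatched, whereas on a finite path ending in a man the last man would lose his partner. One should also note that $W(G-m^*)=W(G)$, so "covers all vertices of $W$" refers to the same set, and $w_0$ is a genuine witness against criticality.
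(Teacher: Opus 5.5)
Your proof is correct and is the paper's argument: your shifted matching $F'$ is exactly $F\triangle E(P)$, which the paper uses as an espousal of $G-m$ that misses the first vertex $w_0$ of $P$, contradicting the criticality of $G-m$. The only difference is that you spell out the checks that $F\triangle E(P)$ is a matching covering $M\setminus\{m\}$ and $W\setminus\{w_0\}$, which the paper leaves implicit.
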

\begin{proof}
 Suppose, for a contradiction, that $P$ is such a path. Let $m\in M$ be the unique element of $M\setminus \bigcup F$.
 Then $F':=E(P)\triangle F$ is a matching with $\bigcup F'=M\setminus \{ m \}\cup W\setminus \{ w \}$, where $w$ is the first vertex of $P$. This 
 contradicts the fact that $G-m$ is critical. 
 \end{proof} 

\begin{theorem}[{Podewski and Steffens, \cite[Theorem 7]{podewski1976injective}}]\label{thm: Marriage ctbl}
A countable bipartite graph $G=(M,W,E)$  is inespousable if and only if it has a $1$-obstruction as a saturated subgraph.  
\end{theorem}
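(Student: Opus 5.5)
The direction ``if'' is short, so I would dispose of it first. Suppose $G'=(M',W',E')$ is a saturated subgraph of $G$ and $m\in M'$ is such that $G'-m$ is critical. Suppose also, for a contradiction, that $F$ is an espousal of $G$. Because $G'$ is saturated, every man of $M'$ has all his $G$-neighbours in $W'$. Hence $F':=\{e\in F:\ e\cap M'\neq\emptyset\}$ is a matching of $G'$ covering $M'$. The restriction of $F'$ to $M'\setminus\{m\}$ is an espousal of $G'-m$, and by criticality it covers all of $W'$. But the partner of $m$ under $F'$ lies in $W'$ and is not used by any other man of $M'$, a contradiction.

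For the ``only if'' direction I would mimic the elementary proof of the countable Milner--Shelah theorem given above. I replace ``Hall's condition'' throughout by the property \emph{(P): there is no $1$-obstruction as a saturated subgraph}. Assume $G$ satisfies (P); I want to show $G$ is espousable. Enumerate $M=\{m_1,m_2,\dots\}$ and match the men one at a time. At each step the current graph $H$ arises from $G$ by deleting finitely many matched pairs. The key lemma to prove is: if $H$ satisfies (P), then for every $m\in M\cap V(H)$ there is $w\in N_H(m)$ such that $H-m-w$ still satisfies (P). Given this lemma, the recursion never gets stuck, because (P) forces $deg_H(m)\ge 1$: otherwise the single vertex $m$ is a saturated $1$-obstruction. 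Since every $m_i$ is handled at stage $i$, the union of the chosen edges is an espousal.

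To prove the key lemma, suppose it fails for $m_0$. Then for every $w\in N_H(m_0)$ there is a saturated $1$-obstruction $O_w$ in $H-m_0-w$, with a distinguished man $m_w$ such that $O_w-m_w$ is critical. If $O_w$ avoided $N_H(w)\cap M$, it would already be saturated in $H$, contradicting (P). So the men of $O_w$ that see $w$ are exactly where the deleted vertex matters. Adding $w$ to $O_w$ should yield a critical saturated subgraph $C_w$ of $H-m_0$ containing $w$; this is the analogue of the sets $X_w$ with $|X_w|=|N_H(X_w)|$.

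I would then prove an analogue of Claim \ref{clm: critical union}: a countable union of critical saturated subgraphs of $H$ is again critical. Existence of an espousal of the union would be built by a recursive gluing using the critical pieces. Criticality would follow because any espousal of the union restricts to an espousal of each piece, and hence covers each piece's women. Granting this, $C:=\bigcup_{w\in N_H(m_0)}C_w$ is critical and saturated, and $C+m_0$ is saturated as well, since $N_H(m_0)\subseteq V(C)$. So $C+m_0$ is a saturated $1$-obstruction in $H$, contradicting (P).

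I expect the main obstacle to be the two gluing facts: that $O_w$ together with $w$ really yields a critical saturated piece (one may instead need to take $O_w-m_w$ plus $w$, after re-matching $m_w$ along an alternating path to $w$), and that countable unions of critical saturated subgraphs are espousable. For the latter I would try to use Observation \ref{obs: no infMalter 1obst}-type arguments, namely the absence of infinite alternating paths, so that the espousals of the finitely or countably many pieces can be merged by alternating-path exchanges. These exchanges must stabilize on each edge, which is where countability is used.
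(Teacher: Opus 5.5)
The paper does not prove this theorem; it quotes it from Podewski and Steffens and uses it as a black box, so your proposal has to stand on its own. Your ``if'' direction is correct. The architecture of the ``only if'' direction is also sound: match men one at a time while preserving (P), and if the key lemma fails at $m_0$, assemble the pieces $C_w$ into a critical saturated $C$ so that $C+m_0$ is a saturated $1$-obstruction.

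\textbf{The union lemma.} This is where the proof has a genuine gap, and it is the lemma that carries all the weight. The statement ``a countable union of critical saturated subgraphs of $H$ is critical'' is true only because $H$ satisfies (P); in general it is false. For example, if two men $a_1,a_2$ both have $b$ as their only neighbour, then $H[\{a_1,b\}]$ and $H[\{a_2,b\}]$ are critical and saturated, but their union is inespousable. Your sketch (merging espousals by alternating exchanges that stabilize by countability) never uses (P), so it cannot work as described.

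The missing idea is that the only obstruction to gluing critical pieces is itself a saturated $1$-obstruction. Take pieces $C_i=(A_i,B_i)$ with perfect matchings $F_i$, and let $F:=F_1\cup\{e\in F_2:\ e\cap B_1=\emptyset\}$. Then $F$ is a matching covering $B_1\cup B_2$ and every man except those $a\in A_2\setminus A_1$ whose $F_2$-partner lies in $B_1$. Suppose such an $a$ exists. The vertices reachable from $a$ by $F$-alternating paths span a saturated $K$, with $K-a$ perfectly matched by $F$. Moreover $K-a$ is critical. Indeed, an infinite $F$-alternating path that starts at a woman with an $F$-edge either enters $A_1$, after which it is an $F_1$-alternating path in $C_1$, or stays in $A_2\setminus A_1$, where it is an $F_2$-alternating path in $C_2$. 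Either way criticality of a piece is contradicted. So $K$ is a saturated $1$-obstruction, contradicting (P).

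For countably many pieces, let $D_n=C_1\cup\dots\cup C_n$. Since $D_n$ is critical and saturated, every perfect matching of $D_{n+1}$ maps the men of $D_n$ onto the women of $D_n$. Hence a fixed perfect matching of $D_n$ extends to one of $D_{n+1}$, and nothing needs to stabilize.

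\textbf{The construction of $C_w$.} There is a smaller gap here. Knowing that some man of $O_w$ sees $w$ does not make $O_w+w$ espousable. You need an $F_w$-augmenting path from $m_w$ to $w$, where $F_w$ is a perfect matching of $O_w-m_w$. Your fallback, $O_w-m_w$ plus $w$, is never critical, since $F_w$ is an espousal of it that misses $w$.

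To get the augmenting path, apply (P) to the part of $O_w$ reachable from $m_w$ by $F_w$-alternating paths. This part is itself a saturated $1$-obstruction of $H-m_0-w$, so one of its men sees $w$.

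Criticality of $O_w+w$ also needs an argument. Let $F'$ be an espousal of it missing a woman $u$. $F'$ must cover $w$, say via $x'$, since otherwise it espouses $O_w$. If $x'=m_w$, then $F'-m_w w$ espouses $O_w-m_w$ while missing $u$. Otherwise, examine the components of $F_w\triangle(F'-x'w)$ at the three endpoints $m_w,x'$ and $u$; a parity argument gives either an espousal of $O_w$ or an espousal of $O_w-m_w$ missing $u$. Both are impossible.

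With these two lemmas supplied, your proof goes through.
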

Since saturated subgraphs inherit the Milner-Shelah condition,   in order to prove Theorem \ref{thm: MilnerShelah} for countable graphs it 
is sufficient to show the following: 
\begin{claim}\label{clm: 1obstructions violates}
 Every $1$-obstruction violates the Milner-Shelah condition. 
\end{claim}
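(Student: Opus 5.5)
The plan is to argue by contradiction: assume $G=(M,W,E)$ is a $1$-obstruction that satisfies the Milner-Shelah condition, and find a finite configuration in which the counting argument of Theorem \ref{thm: MS finite} breaks down. Fix $m_0\in M$ such that $G-m_0$ is critical, and an espousal $F$ of $G-m_0$. Then $F$ fits $G$, every $w\in W$ is covered by $F$, and $m_0$ is the unique man not covered. Write $F(w)$ for the partner of $w$.

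\textbf{Alternating structure.} Let $R$ be the set of vertices reachable from $m_0$ by $F$-alternating paths, where such a path leaves a man by a non-$F$ edge and a woman by her $F$-edge. Put $X:=R\cap M$. Then $N_G(X)\subseteq R$, because any neighbour of a reachable man is reachable. Also $F$ maps $N_G(X)$ bijectively onto $X\setminus\{m_0\}$. Any such path that starts at a woman with her $F$-edge is an $F$-alternating path of the kind forbidden in Observation \ref{obs: no infMalter 1obst}. Hence the alternating tree grown from $m_0$ (children of a man are his non-$F$ neighbours, the child of a woman is her $F$-partner) has no infinite branch.

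\textbf{Finite case and counting.} Suppose $X$ is finite. Every edge at a man of $X$ goes to $N_G(X)$, so
\[ \sum_{m\in X}deg_G(m)\le \sum_{w\in N_G(X)}deg_G(w). \]
Summing the Milner-Shelah inequality over the $F$-edges between $N_G(X)$ and $X\setminus\{m_0\}$ gives
\[ \sum_{w\in N_G(X)}deg_G(w)\le \sum_{m\in X\setminus\{m_0\}}deg_G(m). \]
Together these yield $deg_G(m_0)\le 0$, which contradicts the fact that $m_0$ is not isolated. If every vertex of $R$ has finite degree, the tree is locally finite and has no infinite branch, so König's lemma makes $R$ finite and we are done.

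\textbf{Infinite degrees (main obstacle).} The hard part is when $R$ contains vertices of infinite degree. The Milner-Shelah condition propagates infinitude in one direction: a man adjacent to a woman of infinite degree has infinite degree himself. In particular, a woman $w$ of infinite degree has $deg_G(F(w))=\aleph_0$. I would first try to imitate the truncation in the proof of Lemma \ref{lem: key}. Instead of the whole of $R$, choose a finite subset $X'\ni m_0$ of $X$ that is closed under taking $F$-partners of its women, together with a suitable finite set of women $Y\subseteq N_G(X')$, and compare the two sums above using degrees inside $G[X'\cup Y]$ plus an error term for the edges that are cut off.

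To build such a set I would use well-foundedness: assign ordinal ranks to the nodes of the alternating tree and proceed by induction on rank, keeping at each man only finitely many children. The aim is that the deficit of one edge at $m_0$ survives. If $m_0$ (or another man) has infinite degree, keep $n+1$ of his children, where $n$ bounds the finitely many lost edges, exactly as in the choice of $Y$ in Lemma \ref{lem: key}. Men all of whose neighbours have finite degree are treated as in the locally finite case. Men adjacent to infinite-degree women need an extra argument: their own degree is infinite, so keeping many children of such a man makes his side of the inequality large enough. Controlling this bookkeeping so that the strict inequality $\sum_{w}deg(w)>\sum_{m}deg(m)$ on the truncated finite configuration still conflicts with the Milner-Shelah sum over $F$ is the step I expect to be the main obstacle.
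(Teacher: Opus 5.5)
\textbf{There is a genuine gap: the case with infinite degrees is not proved.}

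Your argument is sound when every vertex reachable from $m_0$ has finite degree. This requires reading the tree as the tree of alternating \emph{paths}; as a tree of alternating walks it can have infinite branches, since alternating cycles cannot be excluded a priori. But that case is essentially the finite theorem. The real content of the claim is the case with vertices of infinite degree, and there you give a programme rather than a proof.

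The programme also has a concrete defect. After truncating to a finite $G[X'\cup Y]$, the inequality $deg_G(F(w))\ge deg_G(w)$ says nothing about truncated degrees when $deg_G(w)=\aleph_0$. Repairing this by keeping more children of $F(w)$ brings in new women and partners. Through alternating cycles these can be adjacent to women already in $Y$, which raises truncated degrees you had already balanced, and nothing forces the process to stop. Your bound $n$ is circular for the same reason. In Lemma \ref{lem: key}, $n$ is fixed in advance by a finite set of deleted vertices; here the number of cut edges depends on the truncation you are still building. Ordinal ranks on the tree of paths do not help, because one vertex sits at many nodes of different ranks.

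\textbf{Missing idea: replace counting by an Eulerian-type walk.} Contract every $F$-edge to a single vertex and orient the remaining edges from $M$ to $W$. In the resulting digraph $D$:
\begin{itemize}
\item out-degree is at least in-degree at every vertex;
\item $m_0$ is a source with positive out-degree;
\item every in-neighbour of a vertex of infinite in-degree has infinite out-degree. This is your propagation remark.
\end{itemize}
Walk from $m_0$ along pairwise distinct arcs, moving to an unvisited vertex whenever possible. The degree inequality means the walk never gets stuck.

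If every vertex is visited finitely often, the walk contains an infinite directed path. That is an infinite $F$-alternating path starting in $W$ with an $F$-edge, contradicting Observation \ref{obs: no infMalter 1obst}.

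Otherwise some $v$ is visited infinitely often, so $deg_D^-(v)$ is infinite. Then the vertex preceding any revisit of $v$ has infinite out-degree, hence an unvisited out-neighbour, and the walk would have gone there instead.

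This treats finite and infinite degrees uniformly and needs no truncation.
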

\begin{proof}
 Suppose, for a contradiction, that $G=(M,W,E)$ is a $1$-obstruction 
that satisfies the Milner-Shelah condition. Let $F$ be a matching that fits $G$ and let $m$ be the unique element of $M \setminus \bigcup F$. 
Define $D=(V',A)$ to be the directed graph obtained  by orienting 
the edges in $E\setminus F$ from $M$ to 
 $W$, deleting the edges in $F$, and then identifying the endpoints of each $e\in F$ to get a single vertex $v_e$. Observe that each vertex of 
 $V'\setminus \{ m \}$ is of the form $v_e$ for some $e\in F$.
We write $N_D^{+}(v)$ and  $N_D^{-}(v)$ for the out-neighbours and in-neighbours of $v$ respectively. The out-degree $deg_{D}^{+}(v)$ and 
in-degree $deg_{D}^{-}(v)$  are the respective sizes of these sets.  By the construction of $D$ and the 
  Milner-Shelah condition for $G$:
  \begin{enumerate}[label=(\arabic*)]
    \item\label{item: inout light}   $deg_{D}^{+}(v)\geq deg_{D}^{-}(v)$ for each $v\in V'\setminus \{ m \}$;
    \item\label{item: source light}  $deg_D^{-}(m)=0$ and $deg_D^{+}(m)>0$ ;
    \item\label{item: inf outdegree light}   If $(u,v)\in A$  and $deg_{D}^{-}(v)$ is infinite, then $deg_{D}^{+}(u) \geq 
    deg_{D}^{-}(v) $.
   \end{enumerate}
  \begin{observation}\label{obs: no inf dirpath ctbl}
   There is no infinite directed path in $D$. 
  \end{observation}
  \begin{proof}
   Such a path would yield an infinite $F$-alternating path in $G$ starting in $W$ with an edge in $F$ contradicting Observation  \ref{obs: no 
   infMalter 1obst}.
  \end{proof}
A directed tour $T$ in $D$ is a finite or infinite sequence $v_0,v_1,v_2,\ldots$ of not necessarily distinct vertices such that the $(v_i, v_{i+1})$ 
are pairwise distinct arcs of $D$. We construct
 an infinite directed tour $T$ in $D$. Let $v_0:=m$ and let $v_1\in N_D^{+}(v_0)$ be arbitrary ($N_D^{+}(v_0)\neq \emptyset$ by \ref{item: 
 source light}).
 Suppose 
 that 
 $v_0,v_1,\dots, v_n$ are 
 already defined for some $n>0$. Then $d^{+}_D(v_n) \geq d^{-}_D(v_n)$  because of \ref{item: inout light}, since $m\neq v_n$ by 
 $deg_{D}^{-}(m)=0$. If there is a $v\in 
 N_D^{+}(v_n)$ such that $v\neq v_i$ for every
 $i\leq n$, then let $v_{n+1}:=v$ for any such $v$. Otherwise $v_{n+1}\in N_D^{+}(v_n)$ is chosen in such a way that
   $(v_n, v_{n+1})\neq (v_i,v_{i+1})$ for $i<n$. Such a choice is always possible by $d^{+}_D(v_n) \geq d^{-}_D(v_n)$. This completes the 
   recursive construction. If in 
 the resulting infinite tour each vertex appears only 
 finitely often, then it has an injective infinite subsequence which provides an infinite directed path in $D$. This contradicts Observation 
 \ref{obs: no inf dirpath ctbl}. Therefore, some vertex $v$ appears infinitely often in $T$.
 Then  $deg_D^{-}(v) $ is infinite because the arcs used by a tour are pairwise distinct. By property \ref{item: inf outdegree light}, $deg_D^{+}(u) 
$ is infinite for every 
  $u\in N_D^{-}(v)$. But then the second visit to $v$ must have been avoided.  Indeed, let $k<n$ 
 with $v=v_k=v_n$. Then $v_{n-1}\in N_D^{-}(v)$ and  
 thus $deg_{D}^{+}(v_{n-1})$ is infinite. But then there exists a $u\in N_D^{+}(v_{n-1})$ with $u\neq v_i$ for $i\leq n$ and $v_{n}$ was
 chosen to be such a $u$ by construction contradicting $v_n=v_k$. 
\end{proof}

\section{Open questions}
The mold of the results in this paper  fits a wide range of problems. As appetizers, let us list two. 

\begin{problem}
Let $\cs=(S_1, \ldots ,S_m)$ be a system of sets, the members of each being unit disks in the plane. Let $q(d)$ be the minimal number $t$ such that 
if $|S_i|\geq t$ for all $i \in [m]$ and $\Delta(\dot{\bigcup}_{j \in [m]} \bigcup S_j)\le d$ then there is an SJR. Find $q(d)$!  How does the answer 
change if we demand that each $S_i$ consists of disjoint disks?
\end{problem}

A simple case:
\begin{problem}
    What is the smallest  $t$ such that if $\cs=(S_i, i \in [m])$ is a family  each whose members is a set of $t$ disjoint unit disks in the plane, and no 
    point belongs to  three disks in $\bigcup_{i \in [m]}S_i$, then $\cs$ has an SJR?
\end{problem}
An obvious challenge is to prove the main results (in particular Theorem \ref{coarseglobal}) not using topology. Another is proving the infinite 
version of Theorem \ref{coarselocal}, namely the hypergraph generalization of the Milner-Shelah theorem.

\newcommand{\Ind}{\operatorname{Ind}}
\newcommand{\DeltaG}{\Delta}

\printbibliography
\end{document}